\documentclass[10pt,a4paper,twoside]{article}
\usepackage[utf8]{inputenc} 
\usepackage[english]{babel} 
\usepackage{amsfonts,amsmath,amssymb,amscd}
\usepackage{bezier,graphics,graphicx}
\usepackage{epsfig, layout,latexsym}
\usepackage{theorem}
\usepackage{cite}
\usepackage{indentfirst}
\usepackage{array}

\usepackage[margin=1in]{geometry}

\theorembodyfont{\sl}
\newtheorem{theorem}{Theorem}[section]

\newtheorem{lemma}[theorem]{Lemma}
\newtheorem{corollary}[theorem]{Corollary}
\newtheorem{remark}[theorem]{Remark}

\numberwithin{equation}{section}
\numberwithin{theorem}{section}

\newcommand{\esup}{\text{ess\,sup}}

\begin{document}

\begin{center}
{\bf Numerical Differentiation of Functions of Two Variables \\ 
Using Chebyshev Polynomials}
\end{center}

\vspace*{5mm}
\centerline{\textsc{M. Kyselov$\!\!{}^{\dag}$}, \textsc{S.G. Solodky  $\!\!{}^{\dag,\ddag}$} }

\vspace*{5mm}
\centerline{$\!\!{}^{\dag}\!\!$ Institute of Mathematics, National Academy of Sciences of Ukraine, Kyiv}
\centerline{$\!\!{}^{\ddag}\!\!$ University of Giessen, Department of Mathematics, Giessen, Germany}

\begin{abstract}
We investigate the problem of numerical differentiation of bivariate functions from weighted Wiener classes
using Chebyshev polynomial expansions.
We develop and analyze a new version of the truncation method based on Chebyshev polynomials and
the idea of hyperbolic cross to reconstruct partial derivatives of arbitrary order.
The method exploits the approximation properties of Chebyshev polynomials and their natural connection
to weighted spaces through the Chebyshev weight function.
We derive a choice rule for the truncation parameter as a function of the noise level,
smoothness parameters of the function class, and the order of differentiation.
This approach allows us to establish explicit error estimates in both weighted integral norms and
uniform metric.
\vspace*{3mm}

\noindent{\textit{Key words:} Numerical differentiation, Chebyshev polynomials, truncation method, weighted Wiener classes, error estimates, regularization}
\vspace*{3mm}

\noindent\textit{2020 Mathematics Subject Classification:} Primary: 65D25; Secondary: 41A25, 42C10.
\end{abstract}

\section{Introduction. Description of the problem}
The problem of numerical differentiation represents one of the fundamental challenges in computational mathematics, with extensive applications spanning scientific computing, engineering analysis, and data processing. This challenge becomes particularly significant when dealing with situations where analytical differentiation of functions is computationally infeasible or when working with data corrupted by measurement errors. Despite decades of theoretical development and practical implementation, numerical differentiation continues to attract substantial research attention (see, for example, \cite{Cul71}, \cite{And84}, \cite{Groetsch_1992_V74_N2}, \cite{Qu96}, \cite{Hanke&Scherzer_2001_V108_N6}, \cite{RammSmir_2001}, \cite{Ahn&Choi&Ramm_2006}, \cite{Wang_Hon_Ch_2006}, \cite{Nakamura&Wang&Wang_2008}, \cite{Dutt_1996}, \cite{SSS_CMAM}, \cite{Singh_2023}, \cite{Kiltz2022}, \cite{Abdelhakem_2020}).

The inherent difficulty of numerical differentiation stems from its ill-posed nature in the sense of Hadamard. Specifically, small perturbations in the input data can lead to arbitrarily large errors in the computed derivatives, making the problem fundamentally unstable. This instability necessitates the development of regularization principles when developing differentiation methods to ensure stable approximations while maintaining an acceptable level of accuracy. Therefore, one such method, called the truncation method,  attracts considerable attention from specialists.

It should be noted that some versions of the truncation method for numerical differentiation
(primarily Legendre truncation methods) have been studied in various contexts (see \cite{Abdelhakem_2020}, \cite{Lu_Naum_Per}).
The approach demonstrates several favorable properties, including straightforward algorithmic implementation and
the absence of requirements for solving complex equation systems.
This prompted further research into the approximation properties of the method across various function classes
(see, for example, \cite{Sol_Stas_UMZ2022}, \cite{Sem_Sol_ETNA}, \cite{Sem_Sol_SIM2025}, \cite{Sol_Stas_JoC2024}).

A distinctive feature of this work is the study of the truncated Chebyshev method for numerical differentiation problems of bivariate functions. Although truncation methods can be applied to various polynomial bases, the use of Chebyshev polynomials offers several distinct advantages. The Chebyshev basis is particularly well-suited for approximation on the interval $[-1,1]$ due to its minimax properties and excellent convergence compared to other polynomial bases.

The primary goal of this study is to establish error bounds for the truncated Chebyshev method when applied to bivariate functions from Wiener weight classes to recover their partial derivatives of arbitrary order. Unlike previous studies, which focused primarily on first-order derivatives and specific metric spaces, here we conduct a comprehensive analysis over a wide range of parameters characterizing the function class, the order of the derivative, and the metrics of both the input and output spaces. The use of Chebyshev polynomials with their corresponding weight function introduces additional technical considerations that must be carefully considered in the error analysis.

In developing our approach, we pay special attention to determining the optimal value of the truncation parameter that minimizes the total error arising from both series truncation and data perturbations. We derive explicit relationships between this parameter and the noise level in the input data, the smoothness characteristics of the function, and the order of the recovered derivative. These relationships provide practical recommendations for implementing the method in computational applications.

Our results contribute to both the theoretical understanding of numerical differentiation in weighted spaces and the development of practical algorithms for derivative recovery. The analysis extends existing results to cover partial derivatives of arbitrary order and various output metrics, providing a comprehensive characterization of the method's capabilities and limitations. The specific properties of the Chebyshev basis allow us to obtain sharp (in order) error estimates that reflect the compromise between the approximation properties of the truncation method and the perturbation level in the input data.

To establish the mathematical framework for our analysis, we now introduce the necessary notation and fundamental concepts.

By $L_{q,\omega}=L_{q,\omega}(Q)$, $0 < q < \infty$, we mean weighted Lebesgue spaces of real-valued functions $f(t,\tau)$ defined on $Q=[-1,1]^2$ with the Chebyshev weight function $\omega(t,\tau)=(1-t^2)^{-1/2}(1-\tau^2)^{-1/2}$ (see \cite{MasonHandscomb2002} for properties of Chebyshev polynomials), equipped with the norms
$$
\|f\|_{L_{q,\omega}} = \left(\int_{Q} \omega(t,\tau)\, |f(t,\tau)|^q\, d\tau dt\right)^{1/q} < \infty.
$$

For $q = \infty$, we define $L_{\infty,\omega}=L_{\infty,\omega}(Q)$ as the space of measurable and bounded almost everywhere functions  with the norm
$$
\|f\|_{L_{\infty,\omega}} = \esup_{\substack{(t,\tau) \in Q}} |f(t,\tau)| < \infty.
$$

Let $\{\bar{T}_k(t)\}_{k=0}^\infty$ be the system of Chebyshev polynomials of the first kind as
$$
\bar{T}_k(t) = \cos(k\arccos t), \quad k=0,1,2,\ldots,
$$
defined on $[-1,1]$.

We proceed from $\{\bar{T}_k(t)\}$ to an orthonormal system $\{T_k(t)\}$ of Chebyshev polynomials according to the relations:
$$
T_0(t)=\frac{1}{\sqrt{\pi}} \bar{T}_0(t),\qquad
T_k(t)=\frac{\sqrt{2}}{\sqrt{\pi}} \bar{T}_k(t),\quad k\in \mathbb{N}.
$$

Note that for $q=2$, the space $L_{2,\omega}$ becomes a weighted Hilbert space with the inner product

$$
\langle f, g\rangle=\int_{Q} \omega(t,\tau)\, f(t,\tau)\, g(t,\tau)\, d\tau dt
$$
and standard norm
$$
\|f\|_{L_{2,\omega}}^2=\sum_{k,j=0}^{\infty}|\langle f, T_{k,j} \rangle|^2 < \infty ,
$$
where
$$
\langle f, T_{k,j}\rangle=\int_{Q} \omega(t,\tau)\, f(t,\tau)\, T_k(t)\, T_j(\tau)\, d\tau dt, \quad k,j=0,1,2,\ldots,
$$
are Fourier-Chebyshev coefficients of $f$, and $T_{k,j}(t,\tau) = T_k(t)T_j(\tau)$ are tensor products of one-dimensional Chebyshev polynomials.

Moreover, let $C=C(Q)$ be the space of continuous on $Q$ real-valued bivariate functions
with bounded norms and $\ell_p$, $1\leq p\leq\infty$, be the space of numerical sequences
$\overline{x}=\{x_{k,j}\}_{k,j\in\mathbb{N}_0}$, $\mathbb{N}_0=\{0\}\bigcup\mathbb{N}$,
such that the corresponding relation
$$
\|\overline{x}\|_{\ell_p}  := \left\{
\begin{array}{cl}
\left(\sum\limits_{k,j\in\mathbb{N}_0} |x_{k,j}|^p\right)^{\frac{1}{p}} < \infty ,
 & 1\leq p<\infty ,
\\
\sup\limits_{k,j\in\mathbb{N}_0}  |x_{k,j}| < \infty ,
  & p=\infty ,
\end{array}
\right.
$$
is fulfilled.

We introduce the space of functions
$$
 W_{s,2}^{\overline{\mu}} := W_{s,2}^{\overline{\mu}}(Q) := \left\{f\in L_{2,\omega}(Q): \|f\|_{s,\overline{\mu}}^{s} =
 \sum\limits_{k,j=0}^{\infty} {\underline{k}}^{s\mu_1} {\underline{j}}^{s\mu_2}
 |\langle f , T_{k,j} \rangle |^s < \infty
   \right\} ,
$$
where $\overline{\mu}=(\mu_1,\mu_2)$, $\mu_1, \mu_2 > 0$, $1\le s<\infty$, $\underline{k}=\max\{1,k\}$,
$k=0,1,2,\dots$.
These functional spaces $W_{s,2}^{\overline{\mu}}$ are extensively studied in approximation theory and are recognized as weighted Wiener classes, as discussed in \cite{Kolom2023}.

By $BW_{s,2}^{\overline{\mu}} = BW_{s,2}^{\overline{\mu}}(Q) = \{f\in W_{s,2}^{\overline{\mu}}: \|f\|_{s,\overline{\mu}} \leq 1\}$
we denote a unit ball (class) from $W_{s,2}^{\overline{\mu}}$.

It should be noted that
$BW^{\overline{\mu}}_{s,2}$ is a generalization of the class of bivariate functions with dominating mixed derivatives.

We represent a function $f(t,\tau)$ from $W_{s,2}^{\overline{\mu}}$ as
$$
 f(t,\tau) = \sum_{k,j=0}^{\infty} \langle f, T_{k,j}\rangle T_k(t)T_j(\tau),
$$
and by $r$-th partial derivative of $f$ we mean the following series
\begin{equation}\label{r_deriv}
f^{(r,0)}(t,\tau) =  \sum_{k=r}^{\infty} \sum_{j=0}^{\infty} \langle f, T_{k,j}\rangle
T^{(r)}_k(t)T_j(\tau), \quad r=1,2,\ldots.
\end{equation}

Moreover, it holds true
\begin{equation}\label{max_cheb}
\max_{-1\le t\le 1} |T_k(t)| = T_k(1) := \left\{
\begin{array}{cl}
1/\sqrt{\pi}, & k=0 ,\\
\sqrt{2/\pi}, & k\in \mathbb{N} .
\end{array}
\right.
\end{equation}

Suppose that instead of the function $f\in W_{s,2}^{\overline{\mu}}$
we only know some its perturbation $f^\delta \in L_{2,\omega}$
with error $\delta$ in the metric of $\ell_p$, $1\leq p\leq\infty$.
More accurately, we assume that there is a sequence of numbers $\overline{f^\delta}= \{\langle
f^\delta, T_{k,j} \rangle\}_{k,j\in\mathbb{N}_0}$ such that for $\overline{\xi}=
\{\xi_{k,j}\}_{k,j\in\mathbb{N}_0}$, where $\xi_{k,j}=\langle f-f_\delta,T_{k,j}\rangle$, and for some $1\leq
p\leq \infty$ the relation
\begin{equation}\label{perturbation}
   \|\overline{\xi}\|_{\ell_p} \leq \delta , \quad 0<\delta <1 ,
\end{equation}
is true.

\section{Truncation method. Error estimation in $L_{2,\omega}$ metric}

As an algorithm for the numerical differentiation of functions from $W_{s,2}^{\overline{\mu}}(Q)$ we will use
a truncation method. The essence of this method is to replace the Fourier series (\ref{r_deriv}) with a finite
Fourier sum using perturbed data $\langle f^\delta, T_{k,j} \rangle$. In the truncation method to ensure the
stability of the approximation and achieve the required order of accuracy, it is necessary to choose properly the
discretization parameter, which here serves as a regularization parameter. So, the process of regularization in the method
under consideration consists in matching the discretization parameter with the perturbation level $\delta$ of the input
data. The simplicity of implementation is the main advantage of this method.

In the case of an arbitrary finite domain $\Omega$ of the coordinate plane $[r,\infty)\times[0,\infty)$, the
truncation method for differentiation of bivariate functions has the form
$$
\mathcal{D}_\Omega^{(r,0)} f^\delta(t,\tau)=\sum_{(k,j)\in \Omega} \langle f^\delta, T_{k,j} \rangle
T_k^{(r)}(t)T_j(\tau).
$$
To increase the efficiency of the approach under study, as the domain $\Omega$ we take a hyperbolic cross of
the following form
$$
\Omega = \Gamma_{n,\gamma} := \{ (k,j):\ k\cdot j^{\gamma} \leq n,\quad k=r,\ldots, n,\quad j=0,\ldots,
(n/r)^{1/\gamma}\}, \qquad \gamma\geq 1 .
$$
Then the proposed version of the truncation method can be written as
\begin{equation} \label{ModVer}
\mathcal{D}_{n,\gamma}^{(r,0)} f^\delta(t,\tau) = \sum_{k=r}^{n} \sum_{j=0}^{(n/k)^{1/\gamma}} \langle f^\delta,
T_{k,j}\rangle T^{(r)}_k(t)T_j(\tau).
\end{equation}

The parameters $n$ and $\gamma$ in (\ref{ModVer}) should be chosen depending on $\delta$, $p$, $s$, $r$ and $\overline{\mu}$
so as to minimize the error of the method $\mathcal{D}_{n,\gamma}^{(r,0)}$, which can be written as
\begin{equation}\label{fullError}
f^{(r,0)}(t,\tau)-\mathcal{D}_{n,\gamma}^{(r,0)} f^\delta(t,\tau)=
\left(f^{(r,0)}(t,\tau)-\mathcal{D}_{n,\gamma}^{(r,0)} f(t,\tau)\right)+\left(\mathcal{D}_{n,\gamma}^{(r,0)}
f(t,\tau)-\mathcal{D}_{n,\gamma}^{(r,0)} f^\delta(t,\tau)\right).
\end{equation}
For the first difference on the right-hand side of (\ref{fullError}), the representation holds true
\begin{equation}\label{Bound_err}
 f^{(r,0)}(t,\tau)-\mathcal{D}_{n,\gamma}^{(r,0)} f(t,\tau)= \triangle_{1}(t,\tau)+\triangle_{2}(t,\tau) ,
\end{equation}
where
\begin{equation}\label{Triangle_1HC}
\triangle_{1}(t,\tau)= \sum_{k=n+1}^{\infty} \, \sum_{j=0}^{\infty} \langle f,
T_{k,j}\rangle T^{(r)}_k(t)T_j(\tau),
\end{equation}
\begin{equation}\label{Triangle_2HC}
 \triangle_{2}(t,\tau)= \sum_{k=r}^{n} \, \sum_{j>(n/k)^{1/\gamma}} \langle f,
 T_{k,j}\rangle T^{(r)}_k(t)T_j(\tau) .
\end{equation}

For our calculations, we need the differentiation formula for Chebyshev polynomials. It is known \cite{MasonHandscomb2002} that:
\begin{equation}\label{Chebyshev_diff_2D}
 \frac{d}{dt} T_{k}(t) = 2 k
\sum_{l=0}^{k-1} \zeta_l T_{l}(t) ,
\quad k\in\mathbb{N} ,\qquad \zeta_0=\sqrt{2}, \quad \zeta_l=1 \text{ for } l\in\mathbb{N} ,
\end{equation}
where the summation is extended over only those terms for which $k+l$ is odd.

Let us estimate the error of the method (\ref{ModVer}) in the metric of $L_{2,\omega}$. A bound for difference (\ref{Bound_err})
is contained in the following statement.
\vspace{2mm}

\begin{lemma}\label{lemma_BoundErrHC}
Let $f\in W^{\overline{\mu}}_{s,2}$, $1\leq s< \infty$, $\mu_1>2r-1/s+1/2$, $\mu_2>\mu_1-2r$.
 Then for $1\leq \gamma < \frac{\mu_2+1/s-1/2}{\mu_1-2r+1/s-1/2}$ it holds
$$
  \|f^{(r,0)}-\mathcal{D}_{n,\gamma}^{(r,0)} f\|_{L_{2,\omega}}\leq c\, \|f\|_{s,\overline{\mu}}\, n^{-\mu_1+2r-1/s+1/2} .
$$

\end{lemma}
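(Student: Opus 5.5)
Using (\ref{Bound_err}), it suffices to bound $\|\triangle_1\|_{L_{2,\omega}}$ and $\|\triangle_2\|_{L_{2,\omega}}$ separately.

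\textbf{Key tool: the $r$-th derivative in the orthonormal basis.} Iterating (\ref{Chebyshev_diff_2D}) $r$ times gives $T_k^{(r)}=\sum_{l<k} a^{(r)}_{k,l}T_l$ with $0\le a^{(r)}_{k,l}\le c\,k^{2r-1}$. Each differentiation contributes a factor $2k$, and summing over intermediate indices adds a factor $k$ per step except the last. We only need the cruder Parseval consequence
$$
\|T_k^{(r)}\|_{L_{2,\omega}[-1,1]}\le c\,k^{2r-1/2}.
$$
This follows since $\sum_{l<k}(a^{(r)}_{k,l})^2\le c\,k\cdot k^{2(2r-1)}$. Equivalently, use the Markov-type estimate $\|T_k^{(r)}\|_{L_{2,\omega}}\asymp k^{2r-1/2}$, checked directly for $r=1$ from (\ref{Chebyshev_diff_2D}), where the norm squared is $\sim 4k^2\cdot k/2$.

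\textbf{Bounding $\triangle_1$.} For each fixed $k$, the functions $T_k^{(r)}(t)T_j(\tau)$, $j\ge 0$, are orthogonal in $\tau$. Apply the triangle inequality in $k$ and Parseval in $j$:
$$
\|\triangle_1\|\le c\sum_{k>n} k^{2r-1/2}\Bigl(\sum_j |\langle f,T_{k,j}\rangle|^2\Bigr)^{1/2}.
$$
Write $|\langle f,T_{k,j}\rangle| = k^{-\mu_1}\underline{j}^{-\mu_2}\cdot(k^{\mu_1}\underline{j}^{\mu_2}|\langle f,T_{k,j}\rangle|)$.

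For $s\le 2$, use $\ell_s\subset\ell_2$ in $j$ and $\underline j^{-\mu_2}\le1$. Then apply Hölder in $k$ with exponents $s,s'$:
$$
\sum_{k>n}k^{2r-1/2-\mu_1}\,b_k,\qquad \|b\|_{\ell_s}\le\|f\|_{s,\overline\mu},
$$
which is at most $\|f\|\bigl(\sum_{k>n}k^{(2r-1/2-\mu_1)s'}\bigr)^{1/s'}\le c\|f\|\,n^{2r-1/2-\mu_1+1-1/s}$. This is not quite the claimed exponent $-\mu_1+2r-1/s+1/2$; the two differ by $1$ minus something.

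The claimed rate suggests a sharper treatment: avoid the triangle inequality in $k$ and exploit near-orthogonality. The derivative $\sum_{k>n}c_kT_k^{(r)}$ expanded in $T_l$ has coefficient $\sum_{k>\max(n,l)}c_k a_{k,l}$. For $r=1$ this is $2\sum_{k>l,\,k+l \text{ odd}} k c_k$, whose sum over $l$ of squares is bounded via Hölder by $\sum_l(\sum_{k>\max(n,l)}k|c_k|)^2$. I would therefore compute $\|\triangle_1\|^2=\sum_l\sum_j|\sum_{k>\max(n,l)} a_{k,l}\langle f,T_{k,j}\rangle|^2$. Then apply Hölder in $k$ with $a_{k,l}\le ck^{2r-1}$ and weights $k^{-\mu_1}$, giving inner factor $\max(n,l)^{(2r-1-\mu_1+1/s')}$ times $\ell_s$-norm pieces. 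Summing over $l<$ and $\ge n$ gives $n^{2(2r-1-\mu_1+1-1/s)}\cdot n$, i.e. the rate $n^{-\mu_1+2r-1/s+1/2}$. The condition $\mu_1>2r-1/s+1/2$ ensures convergence of the sums over $k$ and over $l>n$. For $s>2$, Hölder in $j$ additionally uses $\sum_j\underline j^{-\mu_2 \cdot 2s/(s-2)}<\infty$. I expect this bookkeeping of mixed Hölder exponents, and the case split $s\le2$ / $s>2$, to be the main obstacle.

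\textbf{Bounding $\triangle_2$.} Proceed the same way, with $k\le n$ and $j>(n/k)^{1/\gamma}$. The tail in $j$ yields a factor $(n/k)^{-(\mu_2+1/s-1/2)/\gamma}$ (using Hölder in $j$ with the $\underline j^{-\mu_2}$ weights). The $k$-part yields $k^{2r-\mu_1-1/s+1/2}$ after the same coefficient argument. Summing over $k\le n$ gives
$$
n^{-(\mu_2+1/s-1/2)/\gamma}\sum_{k\le n}k^{(\mu_2+1/s-1/2)/\gamma-(\mu_1-2r+1/s-1/2)}.
$$
The restriction $\gamma<\frac{\mu_2+1/s-1/2}{\mu_1-2r+1/s-1/2}$ makes this exponent exceed $-1$. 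The sum is therefore dominated by $k\approx n$ and gives $n^{-\mu_1+2r-1/s+1/2}$, as claimed; $\mu_2>\mu_1-2r$ guarantees that the admissible range includes $\gamma=1$. Adding the two bounds finishes the proof.
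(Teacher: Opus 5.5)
Your treatment of $\triangle_1$ is correct and is essentially the paper's argument. You expand $T_k^{(r)}$ in the $T_l$, split at $l\approx n$ as in $\triangle_{11},\triangle_{12}$, use Parseval in $(l,j)$, H\"older in $k$, and H\"older in $j$ for $s>2$. Your first, cruder estimate was in fact already enough: $2r-\tfrac12-\mu_1+1-\tfrac1s=-\mu_1+2r-\tfrac1s+\tfrac12$ exactly, so you discarded it because of an arithmetic slip.

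The gap is in $\triangle_2$. Put $a=\mu_2+\tfrac1s-\tfrac12$ and $b=\mu_1-2r+\tfrac1s-\tfrac12$. Your final display does not give the claimed rate. Since $a/\gamma-b>0$, you get $n^{-a/\gamma}\sum_{k\le n}k^{a/\gamma-b}\asymp n^{1-b}$, one full power of $n$ too many. The per-$k$ factor $k^{2r-\mu_1-1/s+1/2}=k^{2r-1/2-\mu_1}\cdot k^{1-1/s}$ already contains the gain of a H\"older sum over $\asymp k$ indices, and you then sum it over $k$ a second time. For $s\ge 2$ there are two correct bookkeepings:
\begin{itemize}
\item use the triangle inequality in $k$ with $\|T_k^{(r)}\|\le ck^{2r-1/2}$, then H\"older in $k$ against the $\ell_s$-sequence of weighted row norms;
\item or follow the paper: regroup by $j$ ($\triangle_{21},\triangle_{22},\triangle_{23}$), so that for fixed $j$ the $k$-range is $(n/j^\gamma,n]$. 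Parseval in $(l,j)$, H\"older in $k$ and summation over $l$ give $\|\triangle_2\|^2\le c\,n^{-2b}\sum_{j\ge1}j^{-2(\mu_2-\gamma b)}x_j^{2/s}$, with $x_j=\sum_k k^{s\mu_1}\underline{j}^{s\mu_2}|\langle f,T_{k,j}\rangle|^s$ and $\sum_j x_j\le\|f\|_{s,\overline{\mu}}^s$. H\"older in $j$ with exponents $s/2$, $s/(s-2)$ closes this exactly when $a>\gamma b$; that is where the restriction on $\gamma$ comes from.
\end{itemize}

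The more serious problem is your $j$-tail factor $(n/k)^{-a/\gamma}$. It is only available for $s\ge 2$, because it comes from H\"older with exponents $s/2$ and $s/(s-2)$. For $1\le s<2$ the best possible tail bound is $(n/k)^{-\mu_2/\gamma}$, obtained by putting all the mass on a single $j$. No argument can repair this, because the statement itself fails there.

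Take $f=T_r(t)T_J(\tau)$ with $J$ the least integer exceeding $(n/r)^{1/\gamma}$. Then:
\begin{itemize}
\item $(r,J)\notin\Gamma_{n,\gamma}$, so $\mathcal{D}^{(r,0)}_{n,\gamma}f=0$;
\item $f^{(r,0)}=T_r^{(r)}(t)T_J(\tau)$ has a fixed positive norm;
\item $\|f\|_{s,\overline{\mu}}=r^{\mu_1}J^{\mu_2}\asymp n^{\mu_2/\gamma}$.
\end{itemize}
The claimed bound therefore forces $\mu_2\ge\gamma b$. For instance, $r=s=\gamma=1$, $\mu_1=2$, $\mu_2=0.3$ satisfies every hypothesis ($\gamma=1<a/b=1.6$), yet the error is of order $1$ against a claimed bound of order $n^{-0.2}$.

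So for $s<2$ the condition must be $\gamma\le\mu_2/b$. This is what step d$'$)--e$'$) of the paper's scheme actually yields when carried out for $\triangle_{21}$--$\triangle_{23}$; the paper's ``repeating all the steps'' passes over the same point. Your proof can be completed for $s\ge 2$ once the bookkeeping above is fixed, and for $s<2$ only under this stronger restriction on $\gamma$.
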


\textit{Proof.} Using $r$ times the formula (\ref{Chebyshev_diff_2D}), from (\ref{Triangle_1HC}) we have
$$
\triangle_{1}(t,\tau)
= 2^r \sum_{k=n+1}^{\infty} \sum_{j=0}^{\infty} k \:\langle f, T_{k,j}\:\rangle
\, T_{j}(\tau)
\mathop{{\sum}}\limits_{l_1=r-1}^{k-1}  l_1  \mathop{{\sum}}\limits_{l_2=r-2}^{l_1-1}
l_2 \ldots \mathop{{\sum}}\limits_{l_{r-1}=1}^{l_{r-2}-1}  l_{r-1}
\mathop{{\sum}}\limits_{l_{r}=0}^{l_{r-1}-1}
\zeta_{l_{r}} \,T_{l_r}(t) .
$$

Note that in the representation $\triangle_{1}$ only those terms take place for which all indexes
$l_1+k, l_2+l_1,...,l_r+l_{r-1}$ are odd.

Further, we change the order of summation and get
$$
\triangle_{1}(t,\tau)= \triangle_{11}(t,\tau)+\triangle_{12}(t,\tau),
$$
where
\begin{equation}\label{triangle_{11}}
\triangle_{11}(t,\tau)= 2^r \sum_{j=0}^{\infty} T_{j}(\tau) \mathop{{\sum}}\limits_{l_r=0}^{n-r+1}
\zeta_{l_r} \: T_{l_r}(t)
 \sum_{k=n+1}^{\infty} k \:\langle f, T_{k,j}\:
 \rangle B^r_{k} ,
\end{equation}
\begin{equation}\label{triangle_{12}}
\triangle_{12}(t,\tau)= 2^r \sum_{j=0}^{\infty} T_{j}(\tau) \mathop{{\sum}}\limits_{l_r=n-r+2}^{\infty}
\zeta_{l_r}\: T_{l_r}(t) \, \sum_{k=l_r+r}^{\infty}\, k \:\langle f, T_{k,j}\:
 \rangle B^r_{k}
\end{equation}
and
\begin{equation}\label{axular1}
B^r_{k}:=\mathop{{\sum}}\limits_{l_1=l_r+r-1}^{k-1}  l_1  \mathop{{\sum}}\limits_{l_2=l_r+r-2}^{l_1-1}
l_2 \ldots \mathop{{\sum}}\limits_{l_{r-1}=l_r+1}^{l_{r-2}-1}  l_{r-1}  \leq c k^{2(r-1)} .
\end{equation}

In the future, all terms in the error representation (\ref{fullError}) of method $\mathcal{D}_{n,\gamma}^{(r,0)}$, for example,
(\ref{Triangle_1HC}) and (\ref{Triangle_2HC}), will be estimated using the same proof scheme, consisting of 5 steps a)-e).
First, let's bound $\triangle_{11}(t,\tau)$ in $L_{2,\omega}$-norm.\\
a) Taking into account (\ref{triangle_{11}}) and (\ref{axular1}), it is easy to write
\begin{equation}  \label{est_11}
\|\triangle_{11}\|_{L_{2,\omega}}^2 \leq c \sum_{j=0}^{\infty} \underline{j}^{-2\mu_2} \mathop{{\sum}}\limits_{l_r=0}^{n-r+1} \left( \sum_{k=n+1}^{\infty} k^{\mu_1} \underline{j}^{\mu_2} |\langle f, T_{k,j}\rangle|\ k^{-\mu_1+2r-1}\right)^2.
\end{equation}
b) Using Hölder's inequality for the sum over $k$, we have for $s > 1$
$$
\left(\sum_{k=n+1}^{\infty} k^{\mu_1} \underline{j}^{\mu_2} |\langle f, T_{k,j}\rangle|\ k^{-\mu_1+2r-1}\right)^2 \leq  \left( \sum_{k=n+1}^{\infty} k^{s\mu_1} \underline{j}^{s\mu_2} |\langle f, T_{kj}\rangle |^s\right)^{\frac{2}{s}}
\left( \sum_{k=n+1}^{\infty} k^{(-\mu_1+2r-1)s/(s-1)} \right)^{\frac{2(s-1)}{s}} .
$$
c) Further, under the condition $\mu_1>2r-1/s$ we obtain
\begin{eqnarray}   \label{c)}
\|\triangle_{11}\|_{L_{2,\omega}}^2  & \leq & c  \, n^{-2(\mu_1-2r+1)+2(s-1)/s} \sum_{j=0}^{\infty} \underline{j}^{-2\mu_2} \left( \sum_{k=n+1}^{\infty} k^{s\mu_1}
\underline{j}^{s\mu_2} |\langle f, T_{k,j}\rangle |^s\right)^{\frac{2}{s}}\mathop{{\sum}}\limits_{l_r=0}^{n-r+1}1
\nonumber \\ &
= & c n^{-2(\mu_1-2r+1/s-1/2)} \sum_{j=0}^{\infty} \underline{j}^{-2\mu_2} \left( \sum_{k=n+1}^{\infty} k^{s\mu_1}
\underline{j}^{s\mu_2} |\langle f, T_{k,j}\rangle |^s\right)^{\frac{2}{s}} .
\end{eqnarray}
For $s=1$, the steps b), c) of the scheme need to be modified. \\
b') Namely, from (\ref{est_11}) we get for $\mu_1>2r-1$
$$
\left(\sum_{k=n+1}^{\infty} k^{\mu_1} \underline{j}^{\mu_2} |\langle f, T_{k,j}\rangle|\ k^{-\mu_1+2r-1}\right)^2 \leq
c\, n^{-2(\mu_1-2r+1)}\, \left( \sum_{k=n+1}^{\infty} k^{\mu_1} \underline{j}^{\mu_2} |\langle f, T_{kj}\rangle |\right)^{2} .
$$
c') Then
$$
\|\triangle_{11}\|_{L_{2,\omega}}^2  \leq c  \, n^{-2(\mu_1-2r+1/2)} \sum_{j=0}^{\infty} \underline{j}^{-2\mu_2}
\left( \sum_{k=n+1}^{\infty} k^{\mu_1} \underline{j}^{\mu_2} |\langle f, T_{kj}\rangle |\right)^{2} .
$$
d) Now using Hölder's inequality for the sum over $j$, we have for $s > 2$
$$
\sum_{j=0}^{\infty} \underline{j}^{-2\mu_2} \left( \sum_{k=n+1}^{\infty} k^{s\mu_1}
\underline{j}^{s\mu_2} |\langle f, T_{k,j}\rangle |^s\right)^{\frac{2}{s}} \leq \left( \sum_{j=0}^{\infty}   \sum_{k=n+1}^{\infty} k^{s\mu_1}
\underline{j}^{s\mu_2} |\langle f, T_{k,j}\rangle |^s\right)^{\frac{2}{s}}\left( \sum_{j=0}^{\infty}\underline{j}^{-2\mu_2 \frac{s}{s-2}}\right)^{\frac{s-2}{s}} .
$$
e) Finally, under the condition $\mu_2\geq(s-2)/(2s)$ we obtain
\begin{equation}  \label{estDelta11}
\|\triangle_{11}\|_{L_{2,\omega}}^2\leq c \|f\|_{s,\overline{\mu}}^2 \, n^{-2(\mu_1-2r+1/s-1/2)} .
\end{equation}
For $s=2$, estimate (\ref{estDelta11}) follows directly from (\ref{c)}).

In case $1\leq s<2$, the last two steps of the scheme need to be modified. \\
d')--e') Namely, due to $\|f\|_{s,\overline{\mu}}\le 1$, we get
$$
\|\triangle_{11}\|_{L_{2,\omega}}^2\leq c \ n^{-2(\mu_1-2r+1/s-1/2)} \sum_{j=0}^{\infty} \underline{j}^{-2\mu_2}\times \left( \sum_{k=n+1}^{\infty} k^{s\mu_1} \underline{j}^{s\mu_2} |\langle f, T_{kj}\rangle |^s\right)^{\frac{2}{s}}
$$
$$
\leq c \, \|f\|_{s,\overline{\mu}}^{2-s} \,n^{-2(\mu_1-2r+1/s-1/2)}\sum_{j=0}^{\infty} \underline{j}^{-2\mu_2} 
\sum_{k=n+1}^{\infty} k^{s\mu_1} \underline{j}^{s\mu_2} |\langle f, T_{kj}\rangle |^s
$$
$$
\leq c\, \|f\|_{s,\overline{\mu}}^2 \, n^{-2(\mu_1-2r+1/s-1/2)} .
$$

Repeating all the steps of the proof scheme, we can estimate $\|\triangle_{12}\|_{L_{2,\omega}}$
for $\mu_1>2r-1/s+1/2$, $1\le s<\infty$ and $\mu_2>1/2-1/s$ as:
$$
\|\triangle_{12}\|_{L_{2,\omega}} \leq c\, \|f\|_{s,\overline{\mu}}\,  n^{-\mu_1+2r-1/s+1/2} .
$$

Further, using (\ref{Chebyshev_diff_2D}) $r$ times for differentiation, we get from (\ref{Triangle_2HC}):
$$
\triangle_{2}(t,\tau)= 2^r \sum_{k=r}^{n}\,
\sum_{j>(n/k)^{1/\gamma}} k\, \langle f,
 T_{k,j}\rangle T_j(\tau)
$$
$$
\times \mathop{{\sum}}\limits_{l_1=r-1}^{k-1}  l_1  \mathop{{\sum}}\limits_{l_2=r-2}^{l_1-1} l_2
\ldots \mathop{{\sum}}\limits_{l_{r-1}=1}^{l_{r-2}-1} l_{r-1}
 \mathop{{\sum}}\limits_{l_{r}=0}^{l_{r-1}-1} \zeta_{l_{r}}\: T_{l_r}(t) .
 $$
Now we change the order of summation for $l_r$ ($r-1$ times) and get
\begin{equation}\label{triangle_{2}}
\triangle_{2}(t,\tau) = 2^r\,
\sum_{k=r}^{n} \, \sum_{j>(n/k)^{1/\gamma}} k \langle f,
 T_{k,j}\rangle T_j(\tau)
\sum_{l_r=0}^{k-r} \zeta_{l_{r}}\: T_{l_r}(t) B^r_{k} ,
\end{equation}
where the quantity $B^r_{k}$ is defined by (\ref{axular1}).

As a result of changing the order of summation by $k$, $j$ and $l_r$, we have
$$
\triangle_{2}(t,\tau) = \triangle_{21}(t,\tau) + \triangle_{22}(t,\tau) + \triangle_{23}(t,\tau) ,
$$
where
\begin{equation}\label{triangle_{21}}
\triangle_{21}(t,\tau) = 2^r\, \sum_{j=1}^{(n/r)^{1/\gamma}}\, T_j(\tau)\, \sum_{l_r=0}^{n/j^\gamma-r}\, \zeta_{l_{r}}\, \: T_{l_r}(t) \, \sum_{k=n/j^\gamma}^{n}k\, \langle f,
 T_{k,j}\rangle \, B^r_{k} ,
\end{equation}
\begin{equation}\label{triangle_{22}}
\triangle_{22}(t,\tau) = 2^r\, \sum_{j=1}^{(n/r)^{1/\gamma}}\, T_j(\tau)\, \sum_{l_r=n/j^\gamma-r+1}^{n-r}\, \zeta_{l_{r}}\, \: T_{l_r}(t) \, \sum_{k=l_r+r}^{n}\, k\, \langle f,
 T_{k,j}\rangle\, B^r_{k} ,
\end{equation}
\begin{equation}\label{triangle_{23}}
\triangle_{23}(t,\tau) = 2^r \sum_{j=(n/r)^{1/\gamma}+1}^{\infty}\, T_j(\tau)\,
\sum_{l_r=0}^{n-r}\, \zeta_{l_{r}}\, \: T_{l_r}(t) \ \sum_{k=l_r+r}^{n}k\, \langle f,
 T_{k,j}\rangle\, B^r_{k} .
\end{equation}
Repeating all the steps of the proof scheme, we obtain
for $\mu_1>2r-1/s+1/2$, $\mu_2>1/2-1/s$, $1\le s<\infty$ and $\mu_2+1/s-1/2 > \gamma (\mu_1-2r+1/s-1/2)$:
$$
\|\triangle_{21}\|_{L_{2,\omega}} + \|\triangle_{22}\|_{L_{2,\omega}} + \|\triangle_{23}\|_{L_{2,\omega}}
\leq c\, \|f\|_{s,\overline{\mu}}\,  n^{-\mu_1+2r-1/s+1/2} .
$$

Lemma is completely proven.

  ${}$ \ \ \ \ \ \ \ \ \ \ \ \ \ \ \ \ \ \ \ \ \ \ \ \ \ \ \ \ \ \ \ \ \ \ \ \ \ \
  \ \ \ \ \ \ \ \ \ \ \ \ \ \ \ \ \ \ \ \ \ \
  \ \ \ \ \ \ \ \ \ \ \ \ \ \ \ \ \ \ \ \ \ \
  \ \ \ \ \ \ \ \ \ \ \ \ \ \ \ \ \ \ \ \ \ \
  \ \ \ \ \ \ \ \ \ \ \ \ \ \ \ \ \ \ \ \ \ \
  \ \ \ \ \ \ \ \ \ \ \ \ \ \ \ \ \ \ \ \ \ \
  \ \ $\Box$

\begin{lemma}\label{lemma_BoundPertL2}
Let the condition (\ref{perturbation}) be satisfied for $1\leq p \leq \infty$. Then for any function $f\in
L_{2,\omega}(Q)$ and $\gamma \geq 1$ it holds
 $$
 \|\mathcal{D}^{(r,0)}_{n,\gamma} f - \mathcal{D}^{(r,0)}_{n,\gamma} f^\delta\|_{L_{2,\omega}} \leq c\delta n^{2r-1/p+1/2} .
 $$
\end{lemma}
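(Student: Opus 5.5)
The difference is linear in the data, so we write
$$
\mathcal{D}^{(r,0)}_{n,\gamma} f - \mathcal{D}^{(r,0)}_{n,\gamma} f^\delta = \sum_{k=r}^{n}\sum_{j=0}^{(n/k)^{1/\gamma}} \xi_{k,j}\, T_k^{(r)}(t)\,T_j(\tau)
$$
and follow the scheme of the proof of Lemma~\ref{lemma_BoundErrHC}. We apply (\ref{Chebyshev_diff_2D}) $r$ times and change the order of summation as in (\ref{triangle_{2}}). This gives
$$
\sum_{j} T_j(\tau)\sum_{l_r=0}^{n-r}\zeta_{l_r}T_{l_r}(t)\sum_{k} k\,\xi_{k,j}B^r_k ,
$$
where the inner sum runs over those $k\ge l_r+r$ with $(k,j)\in\Gamma_{n,\gamma}$, together with the parity restriction. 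By orthonormality of $\{T_{l}(t)T_j(\tau)\}$ in $L_{2,\omega}$ and the bound (\ref{axular1}), $B^r_k\le c k^{2(r-1)}$, we get
$$
\|\mathcal{D}^{(r,0)}_{n,\gamma}(f-f^\delta)\|_{L_{2,\omega}}^2\le c\sum_{j}\sum_{l_r=0}^{n-r}\Big(\sum_{k=r}^{n} k^{2r-1}|\xi_{k,j}|\Big)^2 ,
$$
where $j$ runs over $0\le j\le (n/r)^{1/\gamma}$ and, for each $j$, $k$ ranges over $k\le n/\underline{j}^{\gamma}$.

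Next we bound each inner sum by $n^{2r-1}\sum_k|\xi_{k,j}|$. The $l_r$-sum contributes a factor of at most $n$. For $p=1$ this already gives
$$
\le c\,n^{4r-1}\sum_j\Big(\sum_k|\xi_{k,j}|\Big)^2\le c\,n^{4r-1}\|\overline{\xi}\|_{\ell_1}^2\le c\,\delta^2 n^{4r-2+1},
$$
which is the claimed $\delta n^{2r-1/p+1/2}$ with $p=1$.

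For $p>1$ we apply Hölder's inequality to the $k$-sum for fixed $j$:
$$
\sum_k|\xi_{k,j}|\le \Big(\sum_k|\xi_{k,j}|^p\Big)^{1/p} N_j^{1-1/p},\qquad N_j\le n/\underline{j}^{\gamma}.
$$
Then we must control
$$
\sum_j N_j^{2-2/p}\Big(\sum_k|\xi_{k,j}|^p\Big)^{2/p}.
$$

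\emph{For $p\le 2$:} write $x_j=(\sum_k|\xi_{k,j}|^p)^{1/p}$. Since $\|x\|_{\ell_2}\le\|x\|_{\ell_p}\le\delta$, bounding $N_j\le n$ crudely gives $\le n^{2-2/p}\delta^2$. The total is then $c\,n^{4r-2}\cdot n\cdot n^{2-2/p}\delta^2=c\,\delta^2 n^{2(2r-1/p+1/2)}$, as required.

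\emph{For $p>2$:} a cruder route suffices. Bound the whole double sum by the number of index pairs times $\|\xi\|_{\ell_p}$ through Hölder over all pairs $(k,j)$. Alternatively, apply Hölder in $j$ with exponents $p/2$ and $p/(p-2)$, using $\sum_j N_j\le c\,n\log n$ when $\gamma=1$, or $\le c\,n$ when $\gamma>1$. The crude version with $N_j\le n$ and $\sum_j 1\le n^{1/\gamma}\le n$ gives
$$
n^{2-2/p}\,\delta^2\,(n^{1/\gamma})^{1-2/p},
$$
which is slightly too large. So this case requires the sharper accounting: rather than bounding the inner sum by $n^{2r-1}$, we keep the weight $k^{2r-1}$ and apply Hölder to the pair-sum in the form
$$
\sum_{j}\Big(\sum_k k^{2r-1}|\xi_{k,j}|\Big)^2\le \Big(\sum_{j}\sum_k|\xi_{k,j}|^p\Big)^{2/p}\cdot(\dots).
$$
Here we first use Cauchy–Schwarz in $k$, then Hölder over the set $\Gamma_{n,\gamma}$ with exponent $p/2$. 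The resulting weight factor is
$$
\Big(\sum_{(k,j)\in\Gamma_{n,\gamma}}\big(N_j\, k^{4r-2}\big)^{p/(p-2)}\Big)^{(p-2)/p}.
$$
Summing over $k\le n/j^\gamma$ and then over $j$ gives a convergent $j$-series (the exponent in $j$ is $\gamma\cdot$positive$>1$), dominated by $j=1$. That yields a factor $n^{(4r-1)+ (p-2)/p}$ up to constants; multiplied by the factor $n$ from the $l_r$-sum this is $n^{4r-2/p+1}$, as needed.

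The main obstacle is precisely this bookkeeping for $p>2$. We must check that the hyperbolic-cross cardinality does not produce an extra logarithm or a power of $n^{1/\gamma}$, and the plan is to exploit the decay of $N_j$ in $j$ so that the $j$-sum is dominated by its first terms.
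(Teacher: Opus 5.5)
Your argument is correct, but it is organized differently from the paper's.

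\textbf{The paper's route.} It gives one uniform argument for all $1\le p\le\infty$. It keeps the hyperbolic-cross decay in both places where you discard it, namely $k^{2r-1}\le (n/\underline{j}^{\gamma})^{2r-1}$ and the number of $l_r$ being at most $n/\underline{j}^{\gamma}$. It applies H\"older in $k$ row by row with the weight $k^{2r-1}$ kept, and bounds each row crudely by $(\sum_k|\xi_{k,j}|^p)^{1/p}\le\delta$. This yields $c\,\delta^2 n^{4r-2/p+1}\sum_j \underline{j}^{-\gamma(4r-2/p+1)}$, a series that converges for every $p$ and every $\gamma\ge 1$. So no case split is needed, and only the row-wise bound $\sup_j\|\xi_{\cdot,j}\|_{\ell_p}\le\delta$ is used.

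\textbf{Your route.} You trade the $j$-decay for summability across rows.
\begin{itemize}
\item For $p\le 2$ you ignore the cross structure entirely and use $\ell_p\hookrightarrow\ell_2$ over $j$. As a by-product, this shows that in this range the perturbation bound does not depend on the $j$-truncation at all; it would hold equally for the full square $k,j\le n$. The price is that you need the global $\ell_p$ bound rather than a row-wise one.
\item For $p>2$ your computation is right. Cauchy--Schwarz in $k$ followed by H\"older over $\Gamma_{n,\gamma}$ with exponent $p/2$ gives the weight factor $n^{4r-2/p}$, and hence $\delta^2 n^{4r-2/p+1}$ after the $l_r$-count. The case $p=\infty$ is the limit $p/(p-2)=1$. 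However, this is a more roundabout way of exploiting exactly the decay of $N_j=n/\underline{j}^{\gamma}$ that the paper uses directly.
\end{itemize}

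\textbf{One correction.} Your preliminary remark that a ``cruder route suffices'' for $p>2$, via H\"older over all pairs and the cardinality of $\Gamma_{n,\gamma}$, is only true for $\gamma>1$. For $\gamma=1$ one has $\mathrm{card}(\Gamma_{n,1})\asymp n\ln n$, which introduces a spurious logarithmic factor. That sentence should be removed, and only your final weighted computation kept.
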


\textit{Proof.} Let's write down the following representation
the second difference on the right-hand side of (\ref{fullError}):
$$
\mathcal{D}^{(r,0)}_{n,\gamma} f(t,\tau) - \mathcal{D}^{(r,0)}_{n,\gamma} f^\delta(t,\tau)=
\sum_{k=r}^{n} \sum_{j=0}^{(n/k)^{1/\gamma}} \langle f-f^\delta, T_{k,j}\rangle
\frac{d^r T_k(t)}{d t^r}\,T_j(\tau).
$$
After differentiation $r$ times we get
$$
\mathcal{D}^{(r,0)}_{n,\gamma} f(t,\tau) - \mathcal{D}^{(r,0)}_{n,\gamma} f^\delta(t,\tau)= 2^r \sum_{k=r}^{n}\,
\sum_{j=0}^{(n/k)^{1/\gamma}} k\, \xi_{k,j} \  T_j(\tau)
$$
$$
\times \mathop{{\sum}}\limits_{l_1=r-1}^{k-1}  l_1 \: \mathop{{\sum}}\limits_{l_2=r-2}^{l_1-1} l_2 \:
\ldots \mathop{{\sum}}\limits_{l_{r-1}=1}^{l_{r-2}-1} l_{r-1} \:
 \mathop{{\sum}}\limits_{l_{r}=0}^{l_{r-1}-1} \zeta_{l_{r}}\: T_{l_r}(t) .
$$
Further, we change the order of summation and get
\begin{equation}   \label{2nd_term}
\mathcal{D}^{(r,0)}_{n,\gamma} f(t,\tau) - \mathcal{D}^{(r,0)}_{n,\gamma} f^\delta(t,\tau) = 2^r\,
\sum_{j=0}^{(n/r)^{1/\gamma}} \, T_j(\tau) \mathop{{\sum}}\limits_{l_r=0}^{n/\underline{j}^\gamma-r}
\zeta_{l_r}\, T_{l_r}(t)
\, \sum_{k=l_r+r}^{n/\underline{j}^\gamma} k\, \xi_{k,j}\:  B^r_{k} ,
\end{equation}
where $B^r_{k}$ is defined by (\ref{axular1}).

Now, using orthonormality of Chebyshev polynomials and (\ref{axular1}), we have
$$
\|\mathcal{D}^{(r,0)}_{n,\gamma} f - \mathcal{D}^{(r,0)}_{n,\gamma} f^\delta\|_{L_{2,\omega}}^2 =
\sum_{j=0}^{(n/r)^{1/\gamma}} \, \mathop{{\sum}}\limits_{l_r=0}^{n/\underline{j}^\gamma-r}
\Bigg(\sum_{k=l_r+r}^{n/\underline{j}^\gamma} k\, \xi_{k,j}\:  B^r_{k}\Bigg)^2
$$
$$
\leq c\,
\sum_{j=0}^{(n/r)^{1/\gamma}} \, \mathop{{\sum}}\limits_{l_r=0}^{n/\underline{j}^\gamma-r}
\Bigg(\sum_{k=l_r+r}^{n/\underline{j}^\gamma} |\xi_{k,j}| k^{2r-1}\Bigg)^2.
$$
Using Hölder inequality over $k$, we have for $1<p<\infty$
$$
\sum_{k=l_r+r}^{n/\underline{j}^\gamma} |\xi_{k,j}| k^{2r-1} \leq \Bigg(\sum_{k=l_r+r}^{n/\underline{j}^\gamma}|\xi_{k,j}|^p\Bigg)^{1/p} \Bigg(\sum_{k=l_r+r}^{n/\underline{j}^\gamma}k^{(2r-1)p/(p-1)}\Bigg)^{(p-1)/p}
$$
$$
 \leq c \delta n^{(2r-1/p)} j^{-\gamma(2r-1/p)} .
$$
Then
$$
\begin{array}{ll}
\|\mathcal{D}^{(r,0)}_{n,\gamma} f - \mathcal{D}^{(r,0)}_{n,\gamma} f^\delta\|_{L_{2,\omega}}^2
& \leq c \delta^2 n^{2(2r-1/p)} \sum\limits_{j=0}^{(n/r)^{1/\gamma}}j^{-2\gamma(2r-1/p)} \, \mathop{{\sum}}\limits_{l_r=0}^{n/\underline{j}^\gamma-r}1
\\\\
& \leq c \delta^2 n^{2(2r-1/p)+1} \sum\limits_{j=0}^{(n/r)^{1/\gamma}}j^{-\gamma(2(2r-1/p)+1)}.
\end{array}
$$
Thus,
$$
\|\mathcal{D}^{(r,0)}_{n,\gamma} f - \mathcal{D}^{(r,0)}_{n,\gamma} f^\delta\|_{L_{2,\omega}}^2 \leq c  \delta^2 n^{4r-2/p+1} ,
$$
which was required to prove.

In the case of $p=1$ and $p=\infty$, the assertion of Lemma is proved similarly. \vspace{0.1in}
\vskip -4mm
  ${}$ \ \ \ \ \ \ \ \ \ \ \ \ \ \ \ \ \ \ \ \ \ \ \ \ \ \ \ \ \ \ \ \ \ \ \ \ \ \
  \ \ \ \ \ \ \ \ \ \ \ \ \ \ \ \ \ \ \ \ \ \
  \ \ \ \ \ \ \ \ \ \ \ \ \ \ \ \ \ \ \ \ \ \
  \ \ \ \ \ \ \ \ \ \ \ \ \ \ \ \ \ \ \ \ \ \
  \ \ \ \ \ \ \ \ \ \ \ \ \ \ \ \ \ \ \ \ \ \
  \ \ \ \ \ \ \ \ \ \ \ \ \ \ \ \ \ \ \ \ \ \
  \ \ $\Box$

\begin{theorem} \label{Th1}
Let $f\in BW^{\bar\mu}_{s,2}$, $1\leq s< \infty$, $\mu_1>2r-1/s+1/2$, $\mu_2>\mu_1-2r$, and let the condition
(\ref{perturbation}) be satisfied for $1\leq p \leq \infty$.
Then for $n\asymp \delta^{-\frac{1}{\mu_1-1/p+1/s}}$ and $1\leq \gamma < \frac{\mu_2+1/s-1/2}{\mu_1-2r+1/s-1/2}$ it holds
$$
\|f^{(r,0)} - \mathcal{D}^{(r,0)}_{n,\gamma} f^\delta\|_{L_{2,\omega}} \leq c \delta^{\frac{\mu_1-2r+1/s-1/2}{\mu_1-1/p+1/s}} .
$$
\end{theorem}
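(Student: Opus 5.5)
The proof combines the two preceding lemmas through the error decomposition (\ref{fullError}) and then balances the two resulting terms by the choice of $n$. By the triangle inequality in $L_{2,\omega}$,
$$
\|f^{(r,0)} - \mathcal{D}^{(r,0)}_{n,\gamma} f^\delta\|_{L_{2,\omega}} \le \|f^{(r,0)} - \mathcal{D}^{(r,0)}_{n,\gamma} f\|_{L_{2,\omega}} + \|\mathcal{D}^{(r,0)}_{n,\gamma} f - \mathcal{D}^{(r,0)}_{n,\gamma} f^\delta\|_{L_{2,\omega}} .
$$
Since $f\in BW^{\bar\mu}_{s,2}$, we have $\|f\|_{s,\overline{\mu}}\le 1$. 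The hypotheses on $s,\mu_1,\mu_2,\gamma$ are exactly those of Lemma \ref{lemma_BoundErrHC}, so the first term is at most $c\,n^{-\mu_1+2r-1/s+1/2}$. Condition (\ref{perturbation}) and $\gamma\ge 1$ allow Lemma \ref{lemma_BoundPertL2}, which bounds the second term by $c\,\delta\, n^{2r-1/p+1/2}$. Hence
$$
\|f^{(r,0)} - \mathcal{D}^{(r,0)}_{n,\gamma} f^\delta\|_{L_{2,\omega}} \le c\left( n^{-(\mu_1-2r+1/s-1/2)} + \delta\, n^{2r-1/p+1/2}\right).
$$

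Next we balance the two terms. Setting them equal gives $n^{-\mu_1+2r-1/s+1/2} = \delta n^{2r-1/p+1/2}$, that is, $n^{\mu_1+1/s-1/p}=\delta^{-1}$, so $n\asymp \delta^{-1/(\mu_1-1/p+1/s)}$, as in the theorem. The exponent $\mu_1-1/p+1/s$ is positive: it is at least $\mu_1-1$, and $\mu_1>2r-1/s+1/2\ge 2-1+1/2>1$. Therefore $n\to\infty$ as $\delta\to 0$.

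Substituting this $n$, the truncation term becomes $\delta^{(\mu_1-2r+1/s-1/2)/(\mu_1-1/p+1/s)}$. For the noise term,
$$
\delta\cdot \delta^{-(2r-1/p+1/2)/(\mu_1-1/p+1/s)} = \delta^{(\mu_1-2r+1/s-1/2)/(\mu_1-1/p+1/s)},
$$
which is the same power. Their sum is therefore bounded by $c\,\delta^{\frac{\mu_1-2r+1/s-1/2}{\mu_1-1/p+1/s}}$, as claimed.

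There is no serious obstacle, since all the analytic work is already contained in the two lemmas. The only points to check are minor. The relation $n\asymp\cdot$ involves rounding $n$ to an integer, which only changes constants. For small $\delta$ the chosen $n$ satisfies $n\ge r$, so the hyperbolic cross $\Gamma_{n,\gamma}$ is nonempty. For $\delta$ bounded away from $0$ (with $\delta<1$), the estimate holds trivially once the constant $c$ is enlarged.
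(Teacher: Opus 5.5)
Your proposal is correct and is essentially the paper's own proof: you apply the triangle inequality to (\ref{fullError}), bound the two pieces by Lemmas \ref{lemma_BoundErrHC} and \ref{lemma_BoundPertL2}, and substitute $n\asymp\delta^{-1/(\mu_1-1/p+1/s)}$, which makes the two terms equal. Your extra checks (positivity of $\mu_1-1/p+1/s$, rounding $n$ to an integer, and $n\ge r$ for small $\delta$) are details the paper leaves implicit.
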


{\bf Proof.}
Taking into account  Lemmas \ref{lemma_BoundErrHC} and \ref{lemma_BoundPertL2},  from (\ref{fullError})  we get
$$
\begin{array}{lcl}
\|f^{(r,0)}-\mathcal{D}_{n,\gamma}^{(r,0)} f^\delta\|_{L_{2,\omega}}
& \leq  &
\|f^{(r,0)}-\mathcal{D}_{n,\gamma}^{(r,0)} f\|_{L_{2,\omega}} +
\|\mathcal{D}^{(r,0)}_{n,\gamma} f - \mathcal{D}^{(r,0)}_{n,\gamma} f^\delta\|_{L_{2,\omega}}
\\\\
& \leq &
c\,\left(n^{-\mu_1+2r-1/s+1/2} + \delta\, n^{2r-1/p+1/2}\right)
\\\\
& = & c\, n^{2r+1/2} \left(n^{-\mu_1-1/s} + \delta\, n^{-1/p}\right) .
\end{array}
$$
Substituting the rule $n\asymp \delta^{-\frac{1}{\mu_1-1/p+1/s}}$ into the relation above completely proves Theorem.

\vskip 2mm

\begin{corollary} \label{Cor1}
\rm In the considered problem, the truncation method $\mathcal{D}^{(r,0)}_{n,\gamma}$ (\ref{ModVer})
achieves the accuracy
$O\left(\delta^{\frac{\mu_1-2r+1/s-1/2}{\mu_1-1/p+1/s}}\right)$
on the class $BW^{\bar\mu}_{s,2}$, $1\leq s< \infty$, $\mu_1>2r-1/s+1/2$, $\mu_2>\mu_1-2r$, and requires
$$
\mathrm{card}(\Gamma_{n,\gamma}) \asymp \left\{
\begin{array}{cl}
n \asymp \delta^{-\frac{1}{\mu_1-1/p+1/s}}, & \mbox{ if }\ 1 < \gamma < \frac{\mu_2+1/s-1/2}{\mu_1-2r+1/s-1/2}, \\
n\ \ln n \asymp \delta^{-\frac{1}{\mu_1-1/p+1/s}} \ln \frac{1}{\delta}, & \mbox{ if }\ \gamma=1,
\end{array}
\right.
$$
perturbed Fourier-Chebyshev coefficients.
\end{corollary}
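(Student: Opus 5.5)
The accuracy part of Corollary~\ref{Cor1} is exactly Theorem~\ref{Th1}. Under the stated hypotheses on $s,\mu_1,\mu_2,\gamma$ and with $n\asymp \delta^{-\frac{1}{\mu_1-1/p+1/s}}$, the theorem gives the bound $O\big(\delta^{\frac{\mu_1-2r+1/s-1/2}{\mu_1-1/p+1/s}}\big)$ uniformly over $BW^{\bar\mu}_{s,2}$. So the only new content is the count of perturbed Fourier--Chebyshev coefficients used by $\mathcal{D}^{(r,0)}_{n,\gamma}$, namely $\mathrm{card}(\Gamma_{n,\gamma})$. I will then translate that count into powers of $\delta$.

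For the count I sum over $k$. By (\ref{ModVer}), for each $k=r,\dots,n$ the admissible $j$ run from $0$ to $\lfloor (n/k)^{1/\gamma}\rfloor$. Hence
$$
\mathrm{card}(\Gamma_{n,\gamma})=\sum_{k=r}^{n}\Big(1+\big\lfloor (n/k)^{1/\gamma}\big\rfloor\Big)\asymp n+n^{1/\gamma}\sum_{k=r}^{n}k^{-1/\gamma}.
$$
The two cases follow from comparing the sum with an integral.

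\emph{Case $\gamma>1$.} Here $\sum_{k\le n}k^{-1/\gamma}\asymp n^{1-1/\gamma}$, so the second term is $\asymp n$ and the total is $\asymp n$. The lower bound $\ge n-r+1\asymp n$ is trivial.

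\emph{Case $\gamma=1$.} Here $\sum_{k=r}^{n}k^{-1}\asymp \ln n$, so the total is $\asymp n\ln n$.

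I could equally sum over $j$: there are $\lesssim n/j^\gamma$ values of $k$ for each $j\ge1$, plus $n$ values for $j=0$, which gives the same result.

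Finally I substitute $n\asymp\delta^{-\frac{1}{\mu_1-1/p+1/s}}$. Then $\ln n\asymp \frac{1}{\mu_1-1/p+1/s}\ln\frac1\delta\asymp\ln\frac1\delta$, since the exponent is a fixed positive constant; positivity holds because $\mu_1>2r-1/s+1/2>1/p-1/s$. This gives $n\ln n\asymp\delta^{-\frac{1}{\mu_1-1/p+1/s}}\ln\frac1\delta$.

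The only mildly delicate point is the two-sided $\asymp$, because of the floor functions and the lower limit $k=r$. I will handle it with the elementary bounds $x-1<\lfloor x\rfloor\le x$ together with the standard estimates $\int_r^{n+1}x^{-1/\gamma}dx\le\sum_{k=r}^n k^{-1/\gamma}\le r^{-1/\gamma}+\int_r^n x^{-1/\gamma}dx$. The constants depend only on $r$ and $\gamma$. Nothing else presents an obstacle.
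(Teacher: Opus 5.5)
Your proposal is correct and follows the route the paper intends: the paper states the corollary without proof, reading the accuracy directly off Theorem~\ref{Th1} and leaving the hyperbolic-cross count implicit. You carry out that count explicitly, including the positivity of $\mu_1-1/p+1/s$ that is needed for $\ln n\asymp\ln\frac{1}{\delta}$ (for $\delta$ small).
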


\vskip 2mm

\section{Error estimate for truncation method in the metric of $C$}
Now we study the approximation properties of $\mathcal{D}^{(r,0)}_{n,\gamma}$ in the uniform metric.

\begin{lemma}\label{lemma_BoundErrHCC}
Let $f\in W^{\overline{\mu}}_{s,2}$, $1\leq s< \infty$, $\mu_1>2r-1/s+1$, $\mu_2>\mu_1-2r$.
 Then for $1\leq \gamma < \frac{\mu_2+1/s-1}{\mu_1-2r+1/s-1}$ it holds
$$
  \|f^{(r,0)}-\mathcal{D}^{(r,0)}_{n,\gamma} f\|_{C}\leq c\|f\|_{s,\overline{\mu}} \, n^{-\mu_1+2r-1/s+1} .
$$
\end{lemma}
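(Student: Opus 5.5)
We follow the five-step scheme a)--e) used in the proof of Lemma \ref{lemma_BoundErrHC}, keeping the same decomposition (\ref{Bound_err}) into $\triangle_1$ and $\triangle_2$. We then split further into $\triangle_{11},\triangle_{12}$ as in (\ref{triangle_{11}})--(\ref{triangle_{12}}) and into $\triangle_{21},\triangle_{22},\triangle_{23}$ as in (\ref{triangle_{21}})--(\ref{triangle_{23}}). The only change is that orthonormality (Parseval) is replaced by the pointwise bound (\ref{max_cheb}), $|T_l(t)|\le\sqrt{2/\pi}$. This trades an $\ell_2$ sum over the indices $l_r$ and $j$ for an $\ell_1$ sum, which is exactly why the exponent loses $1/2$: $1/2$ becomes $1$ both in the rate and in the admissible range of $\gamma$.

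For $\triangle_{11}$ we estimate
$$
\|\triangle_{11}\|_C\le c\sum_{j=0}^\infty\sum_{l_r=0}^{n-r+1}\sum_{k=n+1}^\infty k^{2r-1}|\langle f,T_{k,j}\rangle| .
$$
The sum over $l_r$ contributes a factor $n$. We write $k^{2r-1}|\langle f,T_{k,j}\rangle|=k^{\mu_1}\underline{j}^{\mu_2}|\langle f,T_{k,j}\rangle|\cdot k^{-\mu_1+2r-1}\underline{j}^{-\mu_2}$ and apply Hölder's inequality with exponents $s,s'$ to the double sum over $(k,j)$ at once, rather than separately in $k$ and then $j$ as in the $L_2$ case. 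This gives
$$
\|\triangle_{11}\|_C\le c\, n\,\|f\|_{s,\overline{\mu}}\Big(\sum_{k>n}k^{(-\mu_1+2r-1)s'}\Big)^{1/s'}\Big(\sum_j \underline{j}^{-\mu_2 s'}\Big)^{1/s'}\le c\|f\|_{s,\overline{\mu}}\,n^{-\mu_1+2r-1/s+1}.
$$
This needs $\mu_1>2r-1/s$ and $\mu_2>1-1/s$. For $s=1$ we use $\sup$ in place of the $\ell_{s'}$ norm. For $\triangle_{12}$ we bound $\sum_{l_r\ge n-r+2}\sum_{k\ge l_r+r}$ by $\sum_{k>n}k\,(\cdot)$, so it carries the same weight $k^{2r}$ and gives the same rate.

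For $\triangle_2$ the extra factor from the $l_r$-sum is now $n/j^\gamma$ for $\triangle_{21}$, and at most $n$ for $\triangle_{22}$ and $\triangle_{23}$. After Hölder over $k\ge n/j^\gamma$, the $j$-th term of $\triangle_{21}$ is bounded by
$$
c\, (n/j^\gamma)^{-\mu_1+2r-1/s+1}\, j^{-\mu_2}\Big(\sum_k k^{s\mu_1}j^{s\mu_2}|\langle f,T_{k,j}\rangle|^s\Big)^{1/s}.
$$
Summing over $j$ with Hölder, we need $\sum_j j^{(\gamma(\mu_1-2r+1/s-1)-\mu_2)s'}<\infty$, i.e.\ $\mu_2+1/s-1>\gamma(\mu_1-2r+1/s-1)$, which is precisely the hypothesis on $\gamma$. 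For $\triangle_{23}$, where $j>(n/r)^{1/\gamma}$ and $k\le n$, we bound $k^{2r}\le n^{2r}$ and use the decay $j^{-\mu_2}$ over $j\gtrsim n^{1/\gamma}$. This yields $n^{2r-\mu_1\cdot(\mu_2+1/s-1)/(\mu_1\gamma)\cdots}$, which the $\gamma$-condition again dominates by $n^{-\mu_1+2r-1/s+1}$. The hypothesis $\mu_2>\mu_1-2r$ ensures the range of $\gamma$ is nonempty and covers $\mu_2>1-1/s$.

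We expect the main obstacle to be the bookkeeping in $\triangle_{22}$ and $\triangle_{23}$. There the constraint $k\ge\max(l_r+r, n/j^\gamma)$ interacts with the hyperbolic boundary, and one must check that the exponent of $n$ after summing in $j$ is exactly $-\mu_1+2r-1/s+1$ and not worse by a logarithm. We would handle this by swapping back to summing over $k$ first, i.e.\ $\sum_{l_r}\sum_{k\ge l_r+r}\le\sum_k k$, so that every piece reduces to the single weighted sum $\sum_{(k,j)\notin\Gamma_{n,\gamma}}k^{2r}|\langle f,T_{k,j}\rangle|$. We then apply one Hölder inequality over the complement of the hyperbolic cross and estimate $\sum_{(k,j)\notin\Gamma}k^{(2r-\mu_1)s'}\underline{j}^{-\mu_2s'}$ by splitting at $k=n$; the $\gamma$-condition makes the $j$-tail dominated by the $k$-tail.
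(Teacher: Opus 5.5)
The explicit estimate you give for $\triangle_{23}$ does not work and should be dropped. Bounding $k^{2r}\le n^{2r}$ and using only the decay in $j$ yields $\|\triangle_{23}\|_C\le c\,\|f\|_{s,\overline{\mu}}\,n^{2r-(\mu_2+1/s-1)/\gamma}$. This is $O(n^{-\mu_1+2r-1/s+1})$ only when $(\mu_2+1/s-1)/\gamma\ge\mu_1+1/s-1$. For $\gamma=1$ that forces $\mu_2\ge\mu_1$, whereas the lemma only assumes $\mu_2>\mu_1-2r$. You must keep $k^{2r}$ inside H\"older instead. Because $\mu_1-2r>1-1/s$, the series $\sum_{k\ge r}k^{(2r-\mu_1)s'}$ converges, so $\|\triangle_{23}\|_C\le c\,\|f\|_{s,\overline{\mu}}\,n^{-(\mu_2+1/s-1)/\gamma}$. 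This is dominated by $n^{-\mu_1+2r-1/s+1}$ exactly under the hypothesis on $\gamma$. Likewise, using the $l_r$-count $n$ for $\triangle_{22}$, as your third paragraph says, only closes under the stronger condition $\mu_2+1/s-1>\gamma(\mu_1-2r+1/s)$.

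The reduction in your last paragraph repairs both points and is a complete argument. For fixed $k$ at most $k$ values of $l_r$ occur, so all five pieces are bounded by $c\sum_{(k,j)\notin\Gamma_{n,\gamma}}k^{2r}|\langle f,T_{k,j}\rangle|$. After one H\"older inequality, the region $k>n$ contributes $c\,n^{1-(\mu_1-2r)s'}$. The region $k\le n$, $j>(n/k)^{1/\gamma}$ contributes $c\,n^{-(\mu_2s'-1)/\gamma}\sum_{k\le n}k^{(\mu_2s'-1)/\gamma-(\mu_1-2r)s'}\le c\,n^{1-(\mu_1-2r)s'}$. This holds because the exponent of $k$ exceeds $-1$ precisely when $(\mu_2+1/s-1)/\gamma>\mu_1-2r+1/s-1$. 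The $1/s'$-th power is $n^{-\mu_1+2r-1/s+1}$; for $s=1$, replace the $\ell_{s'}$-norm by a supremum.

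This is a different route from the paper's. The paper runs the sequential H\"older scheme a)--e) only on $\triangle_{11}$, where your joint H\"older over $(k,j)$ is equivalent. It then asserts that the other four terms follow the same way. With the crude $l_r$-count $n$, that scheme yields exactly the condition $(\mu_2+1/s-1)/\gamma>\mu_1-2r+1/s$ that the paper records, which is narrower than the $\gamma$-range in the statement. Your single weighted sum over the complement of the hyperbolic cross is shorter, and it is what actually delivers the full stated range of $\gamma$.
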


\textit{Proof.}
As in the previous Section, all terms $\triangle_{11}$ (\ref{triangle_{11}}), $\triangle_{12}$ (\ref{triangle_{12}}),
$\triangle_{21}$ (\ref{triangle_{21}}), $\triangle_{22}$ (\ref{triangle_{22}}), $\triangle_{23}$ (\ref{triangle_{23}})
in the error representation of method $\mathcal{D}^{(r,0)}_{n,\gamma}$ (\ref{ModVer})
will be estimated using the same proof scheme, consisting of 5 steps a)-e).

First, let's bound $\triangle_{11}(t,\tau)$ in $C$-norm.\\
a) Taking into account (\ref{axular1}), it is easy to write
$$
\|\triangle_{11}\|_{C} \leq c \sum_{j=0}^{\infty} \underline{j}^{-\mu_2} \mathop{{\sum}}\limits_{l_r=0}^{n-r+1} \left( \sum_{k=n+1}^{\infty} k^{\mu_1} \underline{j}^{\mu_2} |\langle f, T_{k,j}\rangle|\ k^{-\mu_1+2r-1}\right).
$$

b) Using Hölder's inequality for the sum over $k$, we have for $s > 1$
$$
\sum_{k=n+1}^{\infty} k^{\mu_1} \underline{j}^{\mu_2} |\langle f, T_{k,j}\rangle|\ k^{-\mu_1+2r-1} \leq \left( \sum_{k=n+1}^{\infty} k^{s\mu_1} \underline{j}^{s\mu_2} |\langle f, T_{kj}\rangle |^s\right)^{\frac{1}{s}}
\left( \sum_{k=n+1}^{\infty} k^{(-\mu_1+2r-1)s/(s-1)} \right)^{\frac{s-1}{s}}.
$$

c) Further, under the condition $\mu_1>2r-1/s$ we obtain
$$
\|\triangle_{11}\|_{C}  \leq  c  \, n^{-\mu_1+2r-1+(s-1)/s} \sum_{j=0}^{\infty} \underline{j}^{-\mu_2} \left( \sum_{k=n+1}^{\infty} k^{s\mu_1}
\underline{j}^{s\mu_2} |\langle f, T_{k,j}\rangle |^s\right)^{\frac{1}{s}}\mathop{{\sum}}\limits_{l_r=0}^{n-r+1}1
$$
$$
= c n^{-\mu_1+2r-1/s+1} \sum_{j=0}^{\infty} \underline{j}^{-\mu_2} \left( \sum_{k=n+1}^{\infty} k^{s\mu_1}
\underline{j}^{s\mu_2} |\langle f, T_{k,j}\rangle |^s\right)^{\frac{1}{s}}.
$$

For $s=1$, the steps b), c) of the scheme need to be modified. \\
b') Namely, from a) we get for $\mu_1>2r-1$
$$
\sum_{k=n+1}^{\infty} k^{\mu_1} \underline{j}^{\mu_2} |\langle f, T_{k,j}\rangle|\ k^{-\mu_1+2r-1} \leq
c\, n^{-(\mu_1-2r+1)}\, \sum_{k=n+1}^{\infty} k^{\mu_1} \underline{j}^{\mu_2} |\langle f, T_{kj}\rangle |.
$$
c') Then
$$
\|\triangle_{11}\|_{C}  \leq c  \, n^{-\mu_1+2r} \sum_{j=0}^{\infty} \underline{j}^{-\mu_2}
\sum_{k=n+1}^{\infty} k^{\mu_1} \underline{j}^{\mu_2} |\langle f, T_{kj}\rangle |.
$$

d) Now using Hölder's inequality for the sum over $j$, we have for $s > 1$
$$
\sum_{j=0}^{\infty} \underline{j}^{-\mu_2} \left( \sum_{k=n+1}^{\infty} k^{s\mu_1}
\underline{j}^{s\mu_2} |\langle f, T_{k,j}\rangle |^s\right)^{\frac{1}{s}} \leq \left( \sum_{j=0}^{\infty}   \sum_{k=n+1}^{\infty} k^{s\mu_1}
\underline{j}^{s\mu_2} |\langle f, T_{k,j}\rangle |^s\right)^{\frac{1}{s}}\left( \sum_{j=0}^{\infty}\underline{j}^{-\mu_2 \frac{s}{s-1}}\right)^{\frac{s-1}{s}}.
$$

e) Finally, under the condition $\mu_2>(s-1)/s$ we obtain
$$
\|\triangle_{11}\|_{C}\leq c \|f\|_{s,\overline{\mu}} \, n^{-\mu_1+2r-1/s+1}.
$$

For $s=1$, estimate follows directly from c'):
$$
\|\triangle_{11}\|_{C}\leq c \ n^{-\mu_1+2r} \sum_{j=0}^{\infty} \underline{j}^{-\mu_2} \sum_{k=n+1}^{\infty} k^{\mu_1} \underline{j}^{\mu_2} |\langle f, T_{kj}\rangle |
\leq c \|f\|_{1,\overline{\mu}} \, n^{-\mu_1+2r}.
$$

Again using the same proof scheme for the remaining terms from the error representation, we obtain
for $\mu_1>2r-1/s+1$, $(\mu_2+1/s-1)/\gamma>\mu_1-2r+1/s$ and $\mu_2>1-1/s$:
$$
\|\triangle_{12}\|_{C} \leq c\, \|f\|_{s,\overline{\mu}} \, n^{-\mu_1+2r-1/s+1} ,
\qquad
\|\triangle_{21}\|_{C} \leq c \|f\|_{s,\overline{\mu}} \, n^{-\mu_1+2r-1/s+1} ,
$$
$$
\|\triangle_{22}\|_{C} \leq c \|f\|_{s,\overline{\mu}} \, n^{-\mu_1+2r-1/s+1} ,
\qquad
\|\triangle_{23}\|_{C} \leq c \|f\|_{s,\overline{\mu}} \, n^{-\mu_1+2r-1/s+1} .
$$

Combining the estimates for all terms, the Lemma is completely proven.
\vphantom{AAA}
\\
\ \ \ \ \ \ \ \ \ \ \ \ \ \ \ \ \ \ \ \ \ \ \ \ \ \ \ \ \ \ \ \ \ \ \ \ \ \
\ \ \ \ \ \ \ \ \ \ \ \ \ \ \ \ \ \ \ \ \ \
\ \ \ \ \ \ \ \ \ \ \ \ \ \ \ \ \ \ \ \ \ \
\ \ \ \ \ \ \ \ \ \ \ \ \ \ \ \ \ \ \ \ \ \
\ \ \ \ \ \ \ \ \ \ \ \ \ \ \ \ \ \ \ \ \ \
\ \ \ \ \ \ \ \ \ \ \ \ \ \ \ \ \ \ \ \ \ \
\ \ $\Box$

\begin{lemma}\label{lemma_BoundPertHCC}
Let the condition (\ref{perturbation}) be satisfied for $1\leq p \leq \infty$. Then for any function $f\in
L_{2,\omega}(Q)$ and $\gamma \geq 1$ it holds
 $$
 \|\mathcal{D}^{(r,0)}_{n,\gamma} f - \mathcal{D}^{(r,0)}_{n,\gamma} f^\delta\|_{C} \leq c \delta n^{2r-1/p+1} .
 $$
\end{lemma}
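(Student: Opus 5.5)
We follow the scheme of Lemma \ref{lemma_BoundPertL2}, replacing orthonormality by the uniform bound (\ref{max_cheb}). The starting point is the representation (\ref{2nd_term}), obtained by applying (\ref{Chebyshev_diff_2D}) $r$ times and changing the order of summation:
$$
\mathcal{D}^{(r,0)}_{n,\gamma} f(t,\tau) - \mathcal{D}^{(r,0)}_{n,\gamma} f^\delta(t,\tau) = 2^r
\sum_{j=0}^{(n/r)^{1/\gamma}} T_j(\tau) \sum_{l_r=0}^{n/\underline{j}^\gamma-r}
\zeta_{l_r} T_{l_r}(t) \sum_{k=l_r+r}^{n/\underline{j}^\gamma} k\, \xi_{k,j} B^r_{k} .
$$
Since $|T_j(\tau)|,|T_{l_r}(t)|\le\sqrt{2/\pi}$ and $\zeta_{l_r}\le\sqrt2$, together with (\ref{axular1}) this gives
$$
\|\mathcal{D}^{(r,0)}_{n,\gamma} f - \mathcal{D}^{(r,0)}_{n,\gamma} f^\delta\|_{C} \le c
\sum_{j=0}^{(n/r)^{1/\gamma}} \sum_{l_r=0}^{n/\underline{j}^\gamma-r}
\sum_{k=l_r+r}^{n/\underline{j}^\gamma} |\xi_{k,j}|\, k^{2r-1} .
$$

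Next, for fixed $j$ we bound the inner sum over $k$ by Hölder's inequality with exponents $p$ and $p/(p-1)$, exactly as in Lemma \ref{lemma_BoundPertL2}. Using $\big(\sum_k|\xi_{k,j}|^p\big)^{1/p}\le\|\overline{\xi}\|_{\ell_p}\le\delta$, we get for $1<p<\infty$
$$
\sum_{k=l_r+r}^{n/\underline{j}^\gamma} |\xi_{k,j}| k^{2r-1}\le c\,\delta\, n^{2r-1/p}\,\underline{j}^{-\gamma(2r-1/p)} .
$$
The number of indices $l_r$ is at most $n/\underline{j}^\gamma$, so the whole expression is bounded by
$$
c\,\delta\, n^{2r-1/p+1}\sum_{j=0}^{(n/r)^{1/\gamma}} \underline{j}^{-\gamma(2r-1/p+1)} .
$$
This last sum converges uniformly in $n$, because $\gamma(2r-1/p+1)\ge 2r>1$ when $r\ge1$ and $\gamma\ge1$. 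The claimed bound $c\delta n^{2r-1/p+1}$ follows.

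The endpoint cases are handled the same way. For $p=\infty$ we bound $|\xi_{k,j}|\le\delta$ and sum $k^{2r-1}$ directly, which gives $(n/\underline{j}^\gamma)^{2r}$. For $p=1$ we use $\sum_k|\xi_{k,j}|k^{2r-1}\le(n/\underline{j}^\gamma)^{2r-1}\delta$. In both cases the exponent matches $2r-1/p$, and the $j$-series still converges.

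The main obstacle I expect is the correct bookkeeping of the ranges after changing the order of summation in (\ref{2nd_term}). Here, unlike the $L_{2,\omega}$ case, we sum absolute values over $l_r$ rather than squares, so we must make sure the extra factor is exactly $n/\underline{j}^\gamma$. It is this factor that keeps the $j$-sum summable; a cruder bound by $n$ would produce an extra factor $n^{1/\gamma}$. The $j=0$ term must also be treated via $\underline{j}=1$.
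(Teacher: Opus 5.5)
Your proposal is correct and follows the paper's proof essentially step for step. You use the representation (\ref{2nd_term}), the uniform bounds on $T_j$, $T_{l_r}$, $\zeta_{l_r}$ and $B^r_k$, H\"older in $k$ giving $c\,\delta\,(n/\underline{j}^\gamma)^{2r-1/p}$, the factor $n/\underline{j}^\gamma$ from counting $l_r$, the convergent series $\sum_j \underline{j}^{-\gamma(2r-1/p+1)}$, and direct handling of $p=1,\infty$. One small correction to your closing remark: bounding the $l_r$-count crudely by $n$ would not cost a factor $n^{1/\gamma}$, because the $k$-sum already supplies the decay $\underline{j}^{-\gamma(2r-1/p)}$ with $\gamma(2r-1/p)\ge 1$; at worst a factor $\ln n$ appears, and only when $r=p=\gamma=1$.
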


\textit{Proof.}
Let us recall (see (\ref{2nd_term})) that for the second term on the right-hand side of (\ref{fullError})
the representation is valid
$$
\mathcal{D}^{(r,0)}_{n,\gamma} f(t,\tau) - \mathcal{D}^{(r,0)}_{n,\gamma} f^\delta(t,\tau) = 2^r\,
\sum_{j=0}^{(n/r)^{1/\gamma}} \, T_j(\tau) \mathop{{\sum}}\limits_{l_r=0}^{n/\underline{j}^\gamma-r}
\zeta_{l_r}\, T_{l_r}(t)
\, \sum_{k=l_r+r}^{n/\underline{j}^\gamma} k\, \xi_{k,j}\:  B^r_{k} .
$$

Using (\ref{axular1}), we have
$$
\|\mathcal{D}^{(r,0)}_{n,\gamma} f - \mathcal{D}^{(r,0)}_{n,\gamma} f^\delta\|_{C} \leq c\,
\sum_{j=0}^{(n/r)^{1/\gamma}} \, \mathop{{\sum}}\limits_{l_r=0}^{n/\underline{j}^\gamma-r}
\sum_{k=l_r+r}^{n/\underline{j}^\gamma} |\xi_{k,j}| k^{2r-1} .
$$
Let $1<p<\infty$ first. By virtue of Hölder's inequality, we obtain
$$
\sum_{k=l_r+r}^{n/\underline{j}^\gamma} |\xi_{k,j}| k^{2r-1} \leq \Bigg(\sum_{k=l_r+r}^{n/\underline{j}^\gamma}|\xi_{k,j}|^p\Bigg)^{1/p} \Bigg(\sum_{k=l_r+r}^{n/\underline{j}^\gamma}k^{(2r-1)p/(p-1)}\Bigg)^{(p-1)/p}
$$
$$
 \leq c \delta n^{(2r-1/p)} j^{-\gamma(2r-1/p)} .
$$
Then
$$
\|\mathcal{D}^{(r,0)}_{n,\gamma} f - \mathcal{D}^{(r,0)}_{n,\gamma} f^\delta\|_{C} \leq c \delta n^{(2r-1/p)} \sum_{j=0}^{(n/r)^{1/\gamma}}j^{-\gamma(2r-1/p)} \, \mathop{{\sum}}\limits_{l_r=0}^{n/\underline{j}^\gamma-r}\,1
$$
$$
\leq c \delta n^{2r-1/p+1} \sum_{j=0}^{(n/r)^{1/\gamma}}j^{-\gamma(2r-1/p+1)} .
$$
Thus,
$$
\|\mathcal{D}^{(r,0)}_{n,\gamma} f - \mathcal{D}^{(r,0)}_{n,\gamma} f^\delta\|_{C} \leq c  \delta n^{2r-1/p+1} ,
$$
which was required to prove.

In the case of $p=1$ and $p=\infty$, the assertion of Lemma is proved similarly. \vspace{0.1in}

\vskip -4mm

  ${}$ \ \ \ \ \ \ \ \ \ \ \ \ \ \ \ \ \ \ \ \ \ \ \ \ \ \ \ \ \ \ \ \ \ \ \ \ \ \
  \ \ \ \ \ \ \ \ \ \ \ \ \ \ \ \ \ \ \ \ \ \
  \ \ \ \ \ \ \ \ \ \ \ \ \ \ \ \ \ \ \ \ \ \
  \ \ \ \ \ \ \ \ \ \ \ \ \ \ \ \ \ \ \ \ \ \
  \ \ \ \ \ \ \ \ \ \ \ \ \ \ \ \ \ \ \ \ \ \
  \ \ \ \ \ \ \ \ \ \ \ \ \ \ \ \ \ \ \ \ \ \
  \ \ $\Box$

\begin{theorem} \label{Th2}
Let $f\in BW^{\bar\mu}_{s,2}$, $1\leq s< \infty$, $\mu_1>2r-1/s+1$, $\mu_2>\mu_1-2r$, and let the condition
(\ref{perturbation}) be satisfied for $1\leq p \leq \infty$.
Then for $n\asymp \delta^{-\frac{1}{\mu_1-1/p+1/s}}$ and $1\leq \gamma < \frac{\mu_2+1/s-1}{\mu_1-2r+1/s-1}$ it holds
$$
\|f^{(r,0)} - \mathcal{D}^{(r,0)}_{n,\gamma} f^\delta\|_{C} \leq c \delta^{\frac{\mu_1-2r+1/s-1}{\mu_1-1/p+1/s}} .
$$
\end{theorem}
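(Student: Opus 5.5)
The plan mirrors the proof of Theorem \ref{Th1}, with the $L_{2,\omega}$ estimates replaced by their uniform-metric counterparts. We begin from the error decomposition (\ref{fullError}) and apply the triangle inequality in $C$:
$$
\|f^{(r,0)}-\mathcal{D}_{n,\gamma}^{(r,0)} f^\delta\|_{C}\le \|f^{(r,0)}-\mathcal{D}_{n,\gamma}^{(r,0)} f\|_{C}+\|\mathcal{D}^{(r,0)}_{n,\gamma} f - \mathcal{D}^{(r,0)}_{n,\gamma} f^\delta\|_{C}.
$$

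\emph{Bias term.} The hypotheses $\mu_1>2r-1/s+1$, $\mu_2>\mu_1-2r$ and $1\le\gamma<\frac{\mu_2+1/s-1}{\mu_1-2r+1/s-1}$ are exactly those of Lemma \ref{lemma_BoundErrHCC}. Since $f\in BW^{\bar\mu}_{s,2}$ gives $\|f\|_{s,\overline{\mu}}\le1$, that lemma yields
$$
\|f^{(r,0)}-\mathcal{D}_{n,\gamma}^{(r,0)} f\|_{C}\le c\, n^{-\mu_1+2r-1/s+1}.
$$

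\emph{Noise term.} Lemma \ref{lemma_BoundPertHCC}, valid for every $\gamma\ge1$ and $1\le p\le\infty$, gives
$$
\|\mathcal{D}^{(r,0)}_{n,\gamma} f - \mathcal{D}^{(r,0)}_{n,\gamma} f^\delta\|_{C}\le c\,\delta n^{2r-1/p+1}.
$$

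\emph{Balancing.} Adding the two bounds and factoring out $n^{2r+1}$, the total error is at most
$$
c\, n^{2r+1}\left(n^{-\mu_1-1/s}+\delta n^{-1/p}\right).
$$
The two terms in the bracket are of the same order when $n^{-\mu_1-1/s+1/p}\asymp\delta$, that is, when $n\asymp\delta^{-1/(\mu_1-1/p+1/s)}$, which is the prescribed rule. Because $\mu_1+1/s>1\ge 1/p$, the exponent $\mu_1-1/p+1/s$ is positive, so $n\to\infty$ as $\delta\to0$. Substituting this $n$, both terms reduce to $n^{-\mu_1+2r-1/s+1}$, and
$$
n^{-\mu_1+2r-1/s+1}=\delta^{\frac{\mu_1-2r+1/s-1}{\mu_1-1/p+1/s}},
$$
which is the stated bound.

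There is no real obstacle here, since all the analytic work is already done in Lemmas \ref{lemma_BoundErrHCC} and \ref{lemma_BoundPertHCC}. The only point needing care is to check that the parameter ranges in the theorem match those lemmas exactly; they do.
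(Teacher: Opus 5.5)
Your proposal is correct and follows the paper's own argument. The paper proves this theorem by combining Lemma~\ref{lemma_BoundErrHCC} (with $\|f\|_{s,\overline{\mu}}\le 1$) and Lemma~\ref{lemma_BoundPertHCC} via the triangle inequality on (\ref{fullError}), then balancing with $n\asymp\delta^{-1/(\mu_1-1/p+1/s)}$ as in the proof of Theorem~\ref{Th1}; your explicit balancing and your check that the hypotheses match those of Lemma~\ref{lemma_BoundErrHCC} simply spell out what the paper leaves implicit.
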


{\bf Proof.}
The combination of Lemmas (\ref{lemma_BoundErrHCC}) and (\ref{lemma_BoundPertHCC}) yields the statement of Theorem.

\vskip 2mm

\begin{corollary} \label{Cor2}
\rm In the considered problem, the truncation method $\mathcal{D}^{(r,0)}_{n,\gamma}$ (\ref{ModVer})
achieves the accuracy
$O\left(\delta^{\frac{\mu_1-2r+1/s-1}{\mu_1-1/p+1/s}}\right)$
on the class $BW^{\bar\mu}_{s,2}$, $1\leq s< \infty$, $\mu_1>2r-1/s+1$, $\mu_2>\mu_1-2r$, and requires
$$
\mathrm{card}(\Gamma_{n,\gamma}) \asymp \left\{
\begin{array}{cl}
n \asymp \delta^{-\frac{1}{\mu_1-1/p+1/s}}, & \mbox{ if }\ 1 < \gamma < \frac{\mu_2+1/s-1}{\mu_1-2r+1/s-1}, \\
n\ \ln n \asymp \delta^{-\frac{1}{\mu_1-1/p+1/s}} \ln \frac{1}{\delta}, & \mbox{ if }\ \gamma=1,
\end{array}
\right.
$$
perturbed Fourier-Chebyshev coefficients.
\end{corollary}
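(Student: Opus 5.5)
The statement to prove is Corollary \ref{Cor2}. It has two parts: an accuracy claim and a count of the coefficients the method uses.

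\textbf{Accuracy.} This is a restatement of Theorem \ref{Th2}. Under the same hypotheses, with $n\asymp \delta^{-\frac{1}{\mu_1-1/p+1/s}}$ and $\gamma$ in the admissible range, Theorem \ref{Th2} gives the bound $c\,\delta^{\frac{\mu_1-2r+1/s-1}{\mu_1-1/p+1/s}}$ uniformly over $BW^{\bar\mu}_{s,2}$. For the reader, I would recall why this choice of $n$ is the balancing one. Lemma \ref{lemma_BoundErrHCC} bounds the truncation error by $c\,n^{-\mu_1+2r-1/s+1}$, and Lemma \ref{lemma_BoundPertHCC} bounds the noise term by $c\,\delta n^{2r-1/p+1}$. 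These two are equal exactly when $n^{-\mu_1-1/s}=\delta n^{-1/p}$.

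\textbf{Cardinality of $\Gamma_{n,\gamma}$.} The main work is counting lattice points in the hyperbolic cross. For each $k=r,\dots,n$, the admissible $j$ run over $0\le j\le (n/k)^{1/\gamma}$, so there are $\lfloor (n/k)^{1/\gamma}\rfloor+1$ of them. Summing,
$$
\mathrm{card}(\Gamma_{n,\gamma})=\sum_{k=r}^{n}\left(\lfloor (n/k)^{1/\gamma}\rfloor+1\right)\asymp n+n^{1/\gamma}\sum_{k=r}^{n}k^{-1/\gamma}.
$$
Both the upper and the lower bound come from $\lfloor x\rfloor+1\asymp x+1$ for $x\ge1$.

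\begin{itemize}
\item For $\gamma>1$: since $1/\gamma<1$, we have $\sum_{k\le n}k^{-1/\gamma}\asymp n^{1-1/\gamma}$. The total is therefore $\asymp n$.
\item For $\gamma=1$: the sum is harmonic, $\sum_{k=r}^{n}k^{-1}\asymp \ln n$, so the total is $\asymp n\ln n$.
\end{itemize}

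\textbf{Conversion to $\delta$.} With $n\asymp\delta^{-\frac{1}{\mu_1-1/p+1/s}}$ we get $\ln n\asymp \ln(1/\delta)$. This holds because $\mu_1-1/p+1/s>0$ under the hypothesis $\mu_1>2r-1/s+1$, and because $\delta<1$ small keeps $n$ large.

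The only point needing care is the uniformity of the two-sided constants in the lattice-point count, especially the lower bound for $\gamma=1$. Restricting to $k\le n/2$ gives at least $n/k$ points per column, hence $\gtrsim n\ln n$ in total. Everything else follows directly from Theorem \ref{Th2}.
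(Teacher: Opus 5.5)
Your proposal is correct and follows the route the paper leaves implicit. The paper gives no separate proof of Corollary~\ref{Cor2}: it treats the accuracy claim as a restatement of Theorem~\ref{Th2}, with $n\asymp\delta^{-1/(\mu_1-1/p+1/s)}$ balancing Lemmas~\ref{lemma_BoundErrHCC} and~\ref{lemma_BoundPertHCC}, and the cardinality claim as the standard column-by-column count of the hyperbolic cross, $\sum_{k=r}^{n}\bigl(\lfloor (n/k)^{1/\gamma}\rfloor+1\bigr)\asymp n$ for $\gamma>1$ and $\asymp n\ln n$ for $\gamma=1$, together with $\ln n\asymp\ln(1/\delta)$ for small $\delta$ (using $\mu_1-1/p+1/s>0$), which are exactly the details you supply.
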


\vskip 2mm

\section{Truncation method. Error estimation in $L_{q,\omega}$ metric}
\vspace{2mm}
By $\Pi_{n,m}$ we mean the set of all algebraic bivariate polynomials of order $n$
in the first variable and order $m$ in the second variable.
Recall that $\omega(t,\tau)=(1-t^2)^{-1/2} (1-\tau^2)^{-1/2}$.

\begin{lemma}  \label{T_6.6}
Let ${\bf P}\in \Pi_{n,m}$. Then for any $0<p'\le q\le \infty$ it holds
\begin{equation}   \label{Mark}
\|{\bf P}\|_{L_{q,\omega}} \le c\, (n m)^{\frac{1}{p'}-\frac{1}{q}}\, \|{\bf P}\|_{L_{p',\omega}} .
\end{equation}
\end{lemma}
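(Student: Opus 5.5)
The plan is to reduce the weighted bivariate Nikolskii inequality (\ref{Mark}) to the classical trigonometric Nikolskii inequality on the torus via the substitution $t=\cos\theta$, $\tau=\cos\varphi$. Under this change of variables $d t\,(1-t^2)^{-1/2}=d\theta$ and $d\tau\,(1-\tau^2)^{-1/2}=d\varphi$. Hence for ${\bf P}\in\Pi_{n,m}$ the function
\[
\widetilde{\bf P}(\theta,\varphi):={\bf P}(\cos\theta,\cos\varphi)
\]
is an even trigonometric polynomial of degree at most $n$ in $\theta$ and at most $m$ in $\varphi$ (expand in $\bar T_k(\cos\theta)=\cos k\theta$). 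Moreover,
\[
\|{\bf P}\|_{L_{q,\omega}}=\|\widetilde{\bf P}\|_{L_q([0,\pi]^2)}=4^{-1/q}\,\|\widetilde{\bf P}\|_{L_q(\mathbb{T}^2)}
\]
by evenness, and for $q=\infty$ the sup norms coincide. So it suffices to prove the estimate
\[
\|t\|_{L_q(\mathbb{T}^2)}\le c\,(nm)^{1/p'-1/q}\|t\|_{L_{p'}(\mathbb{T}^2)}
\]
for trigonometric polynomials $t$ of bidegree $(n,m)$, with $0<p'\le q\le\infty$. The constants $4^{\pm 1/q}$ are harmless; if $n=0$ or $m=0$, replace them by $\underline n,\underline m$.

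For this toroidal inequality I would first prove the case $q=\infty$ and then interpolate. For $q=\infty$ and $p'\ge1$, apply the one-dimensional Nikolskii inequality $\|g\|_\infty\le c\,N^{1/p'}\|g\|_{p'}$ for degree-$N$ trigonometric polynomials twice, once in each variable. Fix $\varphi$ and apply it in $\theta$; then take the $L_{p'}$ norm in $\varphi$ of $\sup_\theta|t|$. Bounding $\sup_\varphi$ of $t(\theta,\varphi)$ for each $\theta$ by the univariate inequality in $\varphi$ and combining via Minkowski, i.e. iterating mixed norms, gives $\|t\|_\infty\le c\,(nm)^{1/p'}\|t\|_{p'}$. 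Then for $p'\le q<\infty$ the general case follows from the elementary interpolation
\[
\|t\|_q^q\le\|t\|_\infty^{q-p'}\|t\|_{p'}^{p'}\le c\,(nm)^{(q-p')/p'}\|t\|_{p'}^{q},
\]
which yields exactly the exponent $1/p'-1/q$.

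The main obstacle is the range $0<p'<1$, where Minkowski and duality-based proofs of the one-dimensional Nikolskii inequality fail. There I would use the standard self-improving argument. Set $M=\|t\|_\infty$ and $N=nm$. From the case $p'=2$ (or $p'=1$) we have $M\le c N^{1/2}\|t\|_2$. Write $|t|^2\le M^{2-p'}|t|^{p'}$, which gives
\[
M\le cN^{1/2}M^{1-p'/2}\|t\|_{p'}^{p'/2},
\]
and hence $M^{p'/2}\le cN^{1/2}\|t\|_{p'}^{p'/2}$, i.e. $M\le c N^{1/p'}\|t\|_{p'}$. This needs $M<\infty$, which holds trivially for polynomials. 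The constant then depends on $p'$, which is acceptable since $c$ may depend on $p'$ and $q$. The case $q<\infty$ then follows by the same interpolation step as above. This completes the plan for (\ref{Mark}).
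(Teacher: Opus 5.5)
Your proof is correct, but it takes a genuinely different route from the paper. The paper never leaves $[-1,1]^2$. It gets the $L_{2,\omega}\to L_\infty$ bound directly from Cauchy--Schwarz on the orthonormal Chebyshev expansion together with $|T_k|\le\sqrt{2/\pi}$. It then covers every $0<p'<\infty$ at once by applying that bound to ${\bf P}^j\in\Pi_{jn,jm}$ with an integer $j\ge p'/2$, using $\|{\bf P}^j\|_{L_\infty}=\|{\bf P}\|_{L_\infty}^j$ and $\|{\bf P}^j\|_{L_{2,\omega}}\le\|{\bf P}\|_{L_\infty}^{j-p'/2}\|{\bf P}\|_{L_{p',\omega}}^{p'/2}$. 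The final interpolation step is the same as yours. Your self-improving argument for $p'<1$ is exactly the paper's argument with $j=1$. Raising to the power $j$ is what lets the paper also handle $p'>2$ without importing anything, since there the $L_2$ bound alone only gives $(nm)^{1/2}$. You instead handle $p'\ge1$ by moving to the torus via $t=\cos\theta$ and citing the univariate trigonometric Nikolskii inequality.

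Your route buys transparency. It shows that $L_{q,\omega}$ restricted to $\Pi_{n,m}$ is, up to the factor $4^{1/q}$, isometric to $L_q(\mathbb{T}^2)$ on even trigonometric polynomials. That explains why the exponent is $1/p'$ here rather than the $2/p'$ familiar from unweighted (Legendre-type) norms. The paper's route buys self-containedness: the only Chebyshev-specific input is the bound $\sum_{k\le n}T_k^2(t)\le c\,(n+1)$.

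Two small remarks. First, the iteration in each variable needs no Minkowski inequality: $\sup_\theta\int|t|^{p'}\,d\varphi\le\int\sup_\theta|t|^{p'}\,d\varphi$ together with Fubini suffices. So the iteration works for every $p'>0$ for which the one-dimensional inequality is available, and that inequality is classical for all $0<p'\le\infty$; the split at $p'=1$ is therefore unnecessary. Second, your comment about $n=0$ or $m=0$ is apt, since the paper's own computation actually produces $(n+1)(m+1)$ rather than $nm$.
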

{\bf Proof.} The proof of (\ref{Mark}) in the case $p'=q$ is trivial.
To establish the lemma in other cases we use the proof scheme
of theorems 6.1, 6.2 \cite{DT}.
So, first, we prove (\ref{T_6.6}) in the case $p'=2$, $q=\infty$.
To do this, we write down the expansion of an arbitrary ${\bf P}\in \Pi_{n,m}$
$$
{\bf P}(t,\tau) =
\sum_{k=0}^n\, \sum_{l=0}^m \langle {\bf P}, T_{k,l} \rangle\, T_{k,l}(t,\tau)  ,
$$
where
$$
T_{k,l}(t,\tau) = T_{k}(t)\, T_{l}(\tau)
$$
are tensor products of univariate Chebyshev polynomials, and
$$
\langle {\bf P}, T_{k,l} \rangle =
\int_Q \, {\bf P}(x,y)\, \omega(x,y)\, T_{k,l}(x,y)\, dx dy
$$
are Fourier-Chebyshev coefficients of the polynomial ${\bf P}$.

Next, using the H\"{o}lder inequality, we find
$$
\begin{array}{lcl}
\|{\bf P}\|_{L_\infty} & = & \sup\limits_{-1\le t,\tau\le 1}\,
\left|\sum\limits_{k=0}^n\, \sum\limits_{l=0}^m \langle {\bf P}, T_{k,l} \rangle\, T_{k,l}(t,\tau)\right|
\\\
& \le & \left(\sum\limits_{k=0}^n\, \sum\limits_{l=0}^m \, |\langle {\bf P}, T_{k,l} \rangle|^2\right)^{1/2}\,
\sup\limits_{-1\le t,\tau\le 1}\,
\left(\sum\limits_{k=0}^n\,  |T_{k}(t)|^2\, \sum\limits_{l=0}^m\, |T_{l}(\tau)|^2\right)^{1/2}
\\\
& \le & \|{\bf P}\|_{L_{2,\omega}}\,
\left(\sup\limits_{-1\le t\le 1}\, \sum\limits_{k=0}^n\,  |T_{k}(t)|^2\,
\sup\limits_{-1\le \tau\le 1}\, \sum\limits_{l=0}^m\, |T_{l}(\tau)|^2\right)^{1/2}
\\\
& \le & \frac{2}{\pi}\, \|{\bf P}\|_{L_{2,\omega}}\, \sqrt{(n+1)(m+1)} .
\end{array}
$$
Thus, we have for any ${\bf P}\in \Pi_{n,m}$
\begin{equation}   \label{Mark2}
\|{\bf P}\|_{L_{\infty}} \le c\, (n m)^{1/2}\, \|{\bf P}\|_{L_{2,\omega}} .
\end{equation}

Next, take an arbitrary $p'>0$ and an integer $j$ such that $j\ge p'/2$.
It is easy to see that for any ${\bf P}\in \Pi_{n,m}$ the following holds:
\begin{eqnarray}   \label{P_j}
\|{\bf P}^j\|_{L_{2,\omega}} & := &
\left(\int_Q \, |{\bf P}(t,\tau)|^{2j}\, \omega(t,\tau)\, dt d\tau\right)^{1/2}
= \left(\int_Q \, |{\bf P}(t,\tau)|^{2j-p'}\, |{\bf P}(t,\tau)|^{p'}\, \omega(t,\tau)\, dt d\tau\right)^{1/2}
\nonumber \\
& \le & \sup\limits_{-1\le t,\tau\le 1}\, |{\bf P}(t,\tau)|^{j-p'/2}\,
\left(\int_Q \, |{\bf P}(t,\tau)|^{p'}\, \omega(t,\tau)\, dt d\tau\right)^{1/2}
\nonumber \\\
& \le & \|{\bf P}\|_{L_{\infty}}^{j-p'/2} \,\, \|{\bf P}\|_{L_{p',\omega}}^{p'/2} .
\end{eqnarray}
The combination of (\ref{Mark2}), applied to the polynomial ${\bf P}^j$, and the inequality (\ref{P_j}) gives
$$
\begin{array}{ll}
\|{\bf P}^j\|_{L_\infty} & \le c\, (j^2 n m)^{1/2}
\|{\bf P}^j\|_{L_{2,\omega}} \\\\
& \le c\, (n m)^{1/2} \|{\bf P}\|_{L_{\infty}}^{j-p'/2} \,\, \|{\bf P}\|_{L_{p',\omega}}^{p'/2} .
\end{array}
$$
From here it follows
$$
\begin{array}{ll}
\|{\bf P}\|_{L_\infty}^{p'/2} & \le c\, (n m)^{1/2}\,
\frac{\|{\bf P}\|_{L_{\infty}}^{j}}{\|{\bf P}^j\|_{L_\infty}} \,
\, \|{\bf P}\|_{L_{p',\omega}}^{p'/2} \\\\
& \le c\, (n m)^{1/2}\,  \|{\bf P}\|_{L_{p',\omega}}^{p'/2}
\end{array}
$$
that is
\begin{equation} \label{L_infty}
\|{\bf P}\|_{L_\infty}
\le c\, (n m)^{1/p'}\,  \|{\bf P}\|_{L_{p',\omega}} .
\end{equation}
Thus, the lemma is proved for $0<p'< \infty$, $q=\infty$. \\
Finally, for any $0<p'\le q< \infty$ we get
\begin{equation} \label{P_Lq}
\begin{array}{ll}
\|{\bf P}\|_{L_{q,\omega}} & :=
\left(\int\limits_Q \, |{\bf P}(t,\tau)|^{q}\, \omega(t,\tau)\, dt d\tau\right)^{1/q}
\\\\
& = \left(\int\limits_Q \, |{\bf P}(t,\tau)|^{q-p'}\, |{\bf P}(t,\tau)|^{p'}\, \omega(t,\tau)\, dt d\tau\right)^{1/q}
\\\\
& \le \|{\bf P}\|_{L_{\infty}}^{1-p'/q} \,\, \|{\bf P}\|_{L_{p',\omega}}^{p'/q} .
\end{array}
\end{equation}
From (\ref{P_Lq}) using (\ref{L_infty}) we can obtain
$$
\begin{array}{ll}
\|{\bf P}\|_{L_{q,\omega}} &
\le c\, (n m)^{\frac{1}{p'}(1-\frac{p'}{q})}\,  \|{\bf P}\|_{L_{p',\omega}}^{1-p'/q}
\,  \|{\bf P}\|_{L_{p',\omega}}^{p'/q}
\\\\
& = c\, (n m)^{\frac{1}{p'}-\frac{1}{q}}\,  \|{\bf P}\|_{L_{p',\omega}} ,
\end{array}
$$
which proves the statement.
$\Box$

\begin{lemma}  \label{T_11.1}
Let $g\in L_{q,\omega}$, $2\le q< \infty$. Then it holds
\begin{equation}   \label{LitP}
\|g\|_{L_{q,\omega}} \le c\,
\left(\sum_{k=0}^\infty \, \sum_{l=0}^\infty \,
(k l)^{1-\frac{2}{q}}\, |\langle g, T_{k,l} \rangle|^2\right)^{1/2}\, .
\end{equation}
\end{lemma}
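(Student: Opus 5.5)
The plan is to reduce the weighted Chebyshev setting to the classical trigonometric one on the torus, and then run the standard dyadic Littlewood--Paley argument, using Lemma \ref{T_6.6} as the Nikolskii inequality on each dyadic block. Since the factor $(kl)^{1-2/q}$ vanishes when $k=0$ or $l=0$, I read it (as elsewhere in the paper) with $\underline{k}\,\underline{l}$ in place of $kl$; this only enlarges the right-hand side.

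\emph{Change of variables.} Substitute $t=\cos x$, $\tau=\cos y$, $x,y\in[0,\pi]$. Then $\omega(t,\tau)\,dt\,d\tau$ becomes $dx\,dy$, and $T_k(\cos x)=c_k\cos kx$. Put $G(x,y)=g(\cos x,\cos y)$, extended evenly in each variable to $\mathbb{T}^2=[-\pi,\pi)^2$. Then
$$
\|g\|_{L_{q,\omega}}\asymp\|G\|_{L_q(\mathbb{T}^2)} ,
$$
and the numbers $\langle g,T_{k,l}\rangle$ are, up to absolute constants, the cosine Fourier coefficients of $G$. Polynomials in $\Pi_{n,m}$ become even trigonometric polynomials of degree $(n,m)$. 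So it suffices to prove the inequality for even $G\in L_q(\mathbb{T}^2)$, $2\le q<\infty$.

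\emph{Dyadic decomposition.} For $\bar s=(s_1,s_2)\in\mathbb{N}_0^2$, let $\delta_{\bar s}G$ be the part of the cosine series with $2^{s_1-1}\le k<2^{s_1}$ and $2^{s_2-1}\le l<2^{s_2}$, where the block with $s_i=0$ contains only the index $0$. By the two-dimensional Littlewood--Paley theorem on $\mathbb{T}^2$ (a consequence of the Marcinkiewicz multiplier theorem applied in each variable), for $1<q<\infty$
$$
\|G\|_{q}\le c\,\Big\|\Big(\sum_{\bar s}|\delta_{\bar s}G|^2\Big)^{1/2}\Big\|_{q} .
$$
Because $q/2\ge1$, Minkowski's inequality in $L_{q/2}$ then gives
$$
\|G\|_q\le c\Big(\sum_{\bar s}\|\delta_{\bar s}G\|_q^2\Big)^{1/2} .
$$

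\emph{Nikolskii step and conclusion.} Each $\delta_{\bar s}G$ corresponds to a polynomial in $\Pi_{2^{s_1},2^{s_2}}$. Applying Lemma \ref{T_6.6} with $p'=2$ gives
$$
\|\delta_{\bar s}G\|_q\le c\,2^{(s_1+s_2)(1/2-1/q)}\|\delta_{\bar s}G\|_2 .
$$
By Parseval, $\|\delta_{\bar s}G\|_2^2\asymp\sum_{(k,l)\in\text{block}}|\langle g,T_{k,l}\rangle|^2$. On that block $2^{(s_1+s_2)(1-2/q)}\asymp(\underline{k}\,\underline{l})^{1-2/q}$, so we may move this factor inside the block sum. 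Summing over $\bar s$ yields (\ref{LitP}).

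The main obstacle is the Littlewood--Paley step. It is not proved in the paper and has to be imported: the product (two-parameter) version for $\mathbb{T}^2$, restricted to even functions so that it applies to cosine expansions. I would cite it in the form used in \cite{DT} (or Temlyakov's monographs) and only check that the cosine blocks are exactly the symmetrized trigonometric dyadic blocks. The remaining steps are routine.
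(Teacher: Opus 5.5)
Your proof is correct, and it follows a genuinely different route from the paper's. The difference matters, because the paper's route does not work. The paper argues term by term. It writes $g=\sum_{k,l}g_{k,l}$ with $g_{k,l}=\langle g,T_{k,l}\rangle T_{k,l}$ and asserts
\[
\Bigl(\int_Q\Bigl|\sum_{k,l}g_{k,l}\Bigr|^q\omega\,dt\,d\tau\Bigr)^{2/q}\le\Bigl(\sum_{k,l}\int_Q|g_{k,l}|^q\omega\,dt\,d\tau\Bigr)^{2/q}.
\]
It then uses $(\sum a_{k,l})^{2/q}\le\sum a_{k,l}^{2/q}$ and a Nikolskii inequality on each single term.

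The displayed step is subadditivity of $|\cdot|^q$. That holds only for $0<q\le1$, and for $q\ge2$ it already fails with two equal terms. Nor can the argument be rescued at the level of single terms. The quantity $\int_{-1}^1|T_k(t)|^q(1-t^2)^{-1/2}\,dt$ does not depend on $k\ge1$, so the intermediate bound $\|g\|_{L_{q,\omega}}\le\bigl(\sum_{k,l}\|g_{k,l}\|_{L_{q,\omega}}^2\bigr)^{1/2}$ would give $\|g\|_{L_{q,\omega}}\le c\,\|g\|_{L_{2,\omega}}$. This is false for $q>2$. For example, take $g(t,\tau)=T_1(\tau)\sum_{k=1}^N T_k(t)$, which is a Dirichlet kernel in $x=\arccos t$. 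Its $L_{q,\omega}$-norm is of order $N^{1-1/q}$, while its $L_{2,\omega}$-norm is $\sqrt N$.

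Your grouping into dyadic blocks, combined with the product Littlewood--Paley theorem, is exactly the missing ingredient. The bound $\|g\|_{L_{q,\omega}}\le c\bigl(\sum_{\bar s}\|\delta_{\bar s}g\|_{L_{q,\omega}}^2\bigr)^{1/2}$ is true for blocks. The factor $(\underline{k}\,\underline{l})^{1-2/q}$ then arises legitimately from Lemma~\ref{T_6.6} with $p'=2$, applied to a polynomial of degrees $<2^{s_1}$ and $<2^{s_2}$ rather than to a single product $T_kT_l$. The price is importing the two-parameter Littlewood--Paley theorem on $\mathbb{T}^2$, for even functions via $t=\cos x$. The paper's argument would have been self-contained, but it does not prove the claim.

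Your reading of $kl$ as $\underline{k}\,\underline{l}$ is also substantive rather than cosmetic. For $q>2$ the literal statement fails for $g=T_{0,0}$, whose right-hand side vanishes. With that correction the lemma is the standard mixed-smoothness embedding, and your argument proves it.
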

{\bf Proof.}

First, we write the decomposition of $g$ as follows
$$
g(t,\tau) =
\sum_{k=0}^\infty \, \sum_{l=0}^\infty \, g_{k,l}(t,\tau) ,
$$
where
$$
g_{k,l}(t,\tau) = \langle g, T_{k,l} \rangle\, T_{k,l}(t,\tau) .
$$
Note that for any $q\ge 2$ the following holds:
$$
\begin{array}{ll}
\left(\int\limits_Q \Big|\sum\limits_{k=0}^\infty \, \sum\limits_{l=0}^\infty \,
g_{k,l}(t,\tau)\Big|^q\, \omega(t,\tau) dt d\tau\right)^{2/q}
& \le
\left(\sum\limits_{k=0}^\infty \, \sum\limits_{l=0}^\infty \,
\int\limits_Q |g_{k,l}(t,\tau)|^q\, \omega(t,\tau) dt d\tau\right)^{2/q}
\\\\
& \le
\sum\limits_{k=0}^\infty \, \sum\limits_{l=0}^\infty \,
\left(\int\limits_Q |g_{k,l}(t,\tau)|^q\, \omega(t,\tau) dt d\tau\right)^{2/q}
\\\\
& =
\sum\limits_{k=0}^\infty \, \sum\limits_{l=0}^\infty \,
\|g_{k,l}\|_{L_{q,\omega}}^2 .
\end{array}
$$
In other words, we have
\begin{eqnarray}   \label{g_Lq}
\|g\|_{L_{q,\omega}} & := &
\left(\int\limits_Q |g(t,\tau)|^q\, \omega(t,\tau) dt d\tau\right)^{1/q}
\nonumber \\\  & \le &
\left(\sum\limits_{k=0}^\infty \, \sum\limits_{l=0}^\infty \,
\|g_{k,l}\|^2_{L_{q,\omega}}\right)^{1/2} .
\end{eqnarray}
Applying inequality (\ref{Mark2}) to (\ref{g_Lq}), we obtain
$$
\begin{array}{lcl}
\|g\|_{L_{q,\omega}} & \le & c\,
\left(
\sum\limits_{k=0}^\infty \,\, \sum\limits_{l=0}^\infty \,
(k l)^{2(\frac{1}{2}-\frac{1}{q})}\, \|g_{k,l}\|^2_{L_{2,\omega}}
\right)^{1/2}
\\\\
& = & c\,
\left(
\sum\limits_{k=0}^\infty \,\, \sum\limits_{l=0}^\infty \,
(k l)^{1-\frac{2}{q}}\, |\langle g, T_{k,l} \rangle|^2\ \right)^{1/2} .
\end{array}
$$
As was to be proved. $\Box$

\vskip 2mm

\begin{remark} \label{Jacobi}
The results of Lemmas \ref{T_6.6}, \ref{T_11.1} can be easily generalized to the case of Jacobi polynomials and
functions of any number of variables.
\end{remark}	
\vskip 2mm
Now we are able to estimate the accuracy of the method $\mathcal{D}^{(r,0)}_{n,\gamma}$ in the metric $L_{q,\omega}$.

\begin{lemma}  \label{lemma_FirstDif_Lq}
Let $f\in W^{\bar\mu}_{s,2}$, $1\leq s< \infty$, $2\le q\le\infty$, $\mu_1>2r-1/s-1/q+1$, $\mu_2>1-1/s-1/q$.
Then for any $ 1 \le \gamma < \frac{\mu_2+1/s+1/q-1}{\mu_1-2r+1/s+1/q-1}$
it holds
$$
\|f^{(r,0)}-\mathcal{D}_{n,\gamma}^{(r,0)} f\|_{L_{q,\omega}}\leq c\, \|f\|_{s,\overline{\mu}}\, n^{-\mu_1+2r-1/s-1/q+1} .
$$
\end{lemma}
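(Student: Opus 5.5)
The approach is to reuse the decomposition of the truncation error from Lemma \ref{lemma_BoundErrHC}, namely $f^{(r,0)}-\mathcal{D}_{n,\gamma}^{(r,0)} f=\triangle_{11}+\triangle_{12}+\triangle_{21}+\triangle_{22}+\triangle_{23}$ given by (\ref{triangle_{11}})--(\ref{triangle_{23}}). Each term is then estimated in $L_{q,\omega}$ by Lemma \ref{T_11.1}. This interpolates between the $L_{2,\omega}$ bound (exponent $1/2$) and the $C$ bound (exponent $1$). The case $q=\infty$ is exactly Lemma \ref{lemma_BoundErrHCC}, since $1-1/q=1$ and the hypotheses coincide. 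So I would assume $2\le q<\infty$.

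For $\triangle_{11}$, the Chebyshev coefficient at $(l_r,j)$ is $2^r\zeta_{l_r}\sum_{k>n} k\langle f,T_{k,j}\rangle B^r_k$. Lemma \ref{T_11.1} then gives
$$
\|\triangle_{11}\|_{L_{q,\omega}}^2\le c\sum_{j=0}^\infty \underline{j}^{1-2/q}\sum_{l_r=0}^{n-r+1}\underline{l_r}^{\,1-2/q}\Big(\sum_{k=n+1}^\infty k^{2r-1}|\langle f,T_{k,j}\rangle|\Big)^2 .
$$
The weights $\underline{j}^{1-2/q}$ and $\underline{l_r}^{1-2/q}$ replace $(kl)^{1-2/q}$; I interpret the factor $(kl)$ there as $\underline{k}\,\underline{l}$. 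The sum over $l_r$ contributes $n^{2-2/q}$ instead of $n$. I would then run steps a)--e) of the proof scheme. In step b), Hölder in $k$ (or the $s=1$ variant b')) gives the factor $n^{-2(\mu_1-2r+1/s)}$, so overall $n^{-2(\mu_1-2r+1/s+1/q-1)}$, which is the square of the claimed rate. In step d) the $j$-sum now carries $\underline{j}^{-2\mu_2+1-2/q}$. For $s>2$, Hölder with exponents $s/2$ and $s/(s-2)$ requires $\sum_j \underline{j}^{(-2\mu_2+1-2/q)s/(s-2)}<\infty$, i.e.\ $\mu_2>1-1/s-1/q$, matching the hypothesis. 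For $s\le2$, step d')--e') needs $\underline{j}^{-2\mu_2+1-2/q}\le c$, which is again implied by the hypothesis since $1-1/s\le 1/2$. The term $\triangle_{12}$ is handled the same way. Its outer sum over $l_r>n-r+1$ is absorbed by the tail structure $k\ge l_r+r$, with $l_r^{1-2/q}\le k^{1-2/q}$ moved inside. The condition $\mu_1>2r-1/s-1/q+1$ ensures the resulting $k$-sums converge.

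For $\triangle_2$, I would treat $\triangle_{21},\triangle_{22},\triangle_{23}$ similarly. For fixed $j$, the range of $k$ is restricted to $k>n/j^\gamma$, and $l_r\lesssim n/j^\gamma$ or $l_r\le k$. The $k$-tail therefore yields $(n/j^\gamma)^{-(\mu_1-2r+1/s)}$ times $(n/j^\gamma)^{1-1/q}$ from the $l_r$-sum. After extracting $\underline{j}^{-\mu_2}$ from the norm weights, the $j$-series is
$$
\sum_j j^{\gamma(\mu_1-2r+1/s+1/q-1)}\,j^{-\mu_2+1/2-1/q}\,(\cdots),
$$
handled by Hölder in $j$. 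Convergence, with the correct power of $n$, requires $\gamma(\mu_1-2r+1/s+1/q-1)<\mu_2+1/s+1/q-1$, which is the stated upper bound on $\gamma$. For $\triangle_{23}$, $j>(n/r)^{1/\gamma}$ and the weight $j^{-\mu_2}$ produces $n^{-(\mu_2+\dots)/\gamma}$, which is dominated under the same condition.

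The main obstacle I expect is the bookkeeping in $\triangle_{21}$--$\triangle_{23}$: the $(l_r j)^{1-2/q}$ weights from Lemma \ref{T_11.1} must be tracked simultaneously with the hyperbolic-cross boundary $k\approx n/j^\gamma$, and the exponents must combine into exactly the stated $\gamma$-threshold. I would first do the case $s=2$, where only one Hölder step is needed, to verify the exponents. I would then extend to $s>2$ and $1\le s<2$ by the modifications d), e) and d')--e') used in Lemma \ref{lemma_BoundErrHC}.
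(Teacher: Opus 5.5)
You take the same route as the paper: its proof simply says to rerun the a)--e) scheme of Lemma \ref{lemma_BoundErrHC} with (\ref{LitP}) in step a). Your reading of $(kl)^{1-2/q}$ as $(\underline{k}\,\underline{l})^{1-2/q}$ and your reduction of $q=\infty$ to Lemma \ref{lemma_BoundErrHCC} are sound. For $s\ge 2$ your exponent bookkeeping is also correct, including the $\gamma$-threshold that comes from H\"older in $j$ with exponents $s/2$, $s/(s-2)$.

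The gap is the case $1\le s<2$. For $\triangle_{11}$ and $\triangle_{12}$, step d')--e') needs $\sup_j \underline{j}^{\,1-2/q-2\mu_2}<\infty$, i.e. $\mu_2\ge 1/2-1/q$. You assert this follows from $\mu_2>1-1/s-1/q$ ``since $1-1/s\le 1/2$''. The implication is reversed: $1-1/s\le 1/2$ gives $1-1/s-1/q\le 1/2-1/q$, so the hypothesis is the weaker condition. The same problem hits $\triangle_{21},\triangle_{22},\triangle_{23}$. For $s\le 2$ there is no H\"older step in $j$, and the d')--e') variant requires $\sup_j j^{\,1-2/q-2\mu_2+2\gamma\alpha}<\infty$ (for $\triangle_{23}$, the analogous tail bound over $j>(n/r)^{1/\gamma}$), where $\alpha=\mu_1-2r+1/s+1/q-1$. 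That amounts to $\gamma\alpha\le \mu_2+1/q-1/2$. For $s<2$ this is strictly stronger than the stated $\gamma\alpha<\mu_2+1/s+1/q-1$, so the threshold does not come out as you claim.

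No sharper bookkeeping can close this in general, because the lemma is false in part of that range. Take $r=1$, $s=1$, $q=4$, $\mu_1=2$, $\mu_2=1/10$, $\gamma=1$. Then $\mu_1>7/4=2r-1/s-1/q+1$, $\mu_2>-1/4$, and the admissible bound for $\gamma$ is $7/5$, so all hypotheses hold. For $f=T_1(t)T_{n+1}(\tau)$ we have $\|f\|_{1,\overline{\mu}}=(n+1)^{1/10}$ and $(1,n+1)\notin\Gamma_{n,1}$. Hence $\mathcal{D}^{(1,0)}_{n,1}f=0$, and the error is $\sqrt{2/\pi}\,T_{n+1}(\tau)$, whose $L_{4,\omega}$-norm is a positive constant independent of $n$. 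The asserted bound, however, is $c\,(n+1)^{1/10}n^{-1/4}\to 0$.

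More generally, take $\gamma=1$ and $f=T_r(t)T_J(\tau)$ with $J=\lfloor n/r\rfloor+1$. The lemma then fails whenever $\mu_1-2r<\mu_2<\mu_1-2r+1/s+1/q-1$, a range that is nonempty exactly when $1/s+1/q>1$. The same example with $q=2$ also contradicts Lemma \ref{lemma_BoundErrHC} for $s=1$. The paper's one-line proof passes over precisely this point.

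What your argument actually establishes is the lemma for $s\ge 2$ and for $q=\infty$. For $1\le s<2$ it holds only under the stronger assumption $\gamma(\mu_1-2r+1/s+1/q-1)\le \mu_2+1/q-1/2$.
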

{\bf Proof.}
Recall that
$$
f^{(r,0)}(t,\tau) - \mathcal{D}_{n,\gamma}^{(r,0)} f(t,\tau) =
\triangle_{11}(t,\tau) + \triangle_{12}(t,\tau) + \triangle_{21}(t,\tau) + \triangle_{22}(t,\tau) + \triangle_{23}(t,\tau),
$$
where the expansion terms $\triangle_{11}$ -- $\triangle_{23}$ are defined by the relations
(\ref{triangle_{11}}), (\ref{triangle_{12}}), (\ref{triangle_{21}})--(\ref{triangle_{23}}), respectively.

When estimating all these terms, we will follow the proof scheme from Lemma \ref{lemma_BoundErrHC}.
The only difference is that in step a) we now use inequality (\ref{LitP}), which allows us to move
from the norm in $L_{q,\omega}$ to the norm in $L_{2,\omega}$.

In this way we obtain the statement of Lemma. $\Box$

\begin{lemma}\label{lemma_SecDif_Lq}
Let the condition (\ref{perturbation}) be satisfied for $1\leq p \leq \infty$.  Then for any function $f\in
L_{2,\omega}$ and $2\le q< \infty$ it holds
$$
\|\mathcal{D}^{(r,0)}_{n,\gamma} f - \mathcal{D}^{(r,0)}_{n,\gamma} f^\delta\|_{L_{q,\omega}} \leq
c\, \delta\, n^{2r-1/p-1/q+1} .
$$
\end{lemma}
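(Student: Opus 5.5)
We start from the representation (\ref{2nd_term}) of the perturbation term,
$$
\mathcal{D}^{(r,0)}_{n,\gamma} f - \mathcal{D}^{(r,0)}_{n,\gamma} f^\delta = 2^r
\sum_{j=0}^{(n/r)^{1/\gamma}} T_j(\tau) \sum_{l_r=0}^{n/\underline{j}^\gamma-r}
\zeta_{l_r} T_{l_r}(t) \sum_{k=l_r+r}^{n/\underline{j}^\gamma} k\, \xi_{k,j} B^r_{k} .
$$
This is an expansion in the orthonormal system $T_{l_r}(t)T_j(\tau)$. Its Fourier--Chebyshev coefficients are therefore the inner sums $\eta_{l_r,j}:=2^r\zeta_{l_r}\sum_{k=l_r+r}^{n/\underline{j}^\gamma} k\,\xi_{k,j}B^r_k$. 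They vanish outside the hyperbolic-cross-type region $\{(l,j):\ l\le n/\underline{j}^\gamma-r,\ j\le (n/r)^{1/\gamma}\}$. We apply Lemma \ref{T_11.1} with $g$ equal to this difference, which gives
$$
\|\mathcal{D}^{(r,0)}_{n,\gamma} f - \mathcal{D}^{(r,0)}_{n,\gamma} f^\delta\|_{L_{q,\omega}}^2 \le c\sum_{j}\sum_{l_r}(\underline{l_r}\,\underline{j})^{1-2/q}\,|\eta_{l_r,j}|^2 .
$$
In (\ref{LitP}) the factor $(kl)$ must be read with $\underline{k}\,\underline{l}$ so that the zero indices are harmless. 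This is consistent with how (\ref{Mark2}) is used there.

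Next we bound each coefficient exactly as in the proof of Lemma \ref{lemma_BoundPertL2}. By (\ref{axular1}) and Hölder's inequality in $k$ (with the obvious modifications for $p=1$ and $p=\infty$),
$$
|\eta_{l_r,j}|\le c\sum_{k\le n/\underline{j}^\gamma}|\xi_{k,j}|k^{2r-1}\le c\,\delta\, n^{2r-1/p}\,\underline{j}^{-\gamma(2r-1/p)} .
$$
Inserting this, the sum over $l_r\le n/\underline{j}^\gamma$ of $\underline{l_r}^{1-2/q}$ is bounded by $c\,(n/\underline{j}^\gamma)^{2-2/q}$. The squared norm is therefore at most
$$
c\,\delta^2 n^{2(2r-1/p)+2-2/q}\sum_{j\le (n/r)^{1/\gamma}} \underline{j}^{\,1-2/q-\gamma(2(2r-1/p)+2-2/q)} .
$$

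The remaining task is to show that this $j$-sum is bounded uniformly in $n$. Its exponent is $1-2/q-\gamma(4r-2/p+2-2/q)$. For $\gamma\ge1$, $r\ge1$, $p\ge1$ and $q\ge 2$ this is at most $1-2/q-(4r-2/p+2-2/q)=-4r+2/p-1\le -4r+1<-1$, so the series converges. Taking square roots yields $c\,\delta\,n^{2r-1/p+1-1/q}$, which is the claim.

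The main point to watch is the bookkeeping of the weight $(\underline{l}\,\underline{j})^{1-2/q}$ from Lemma \ref{T_11.1}. The weight in $l_r$ is what produces the extra $n^{1/2-1/q}$ relative to Lemma \ref{lemma_BoundPertL2}, and one must confirm that the $j$-weight does not spoil convergence. The exponent computation above settles this. The cases $p=1$ and $p=\infty$ only change the Hölder step: for $p=1$ we use $k^{2r-1}\le (n/\underline{j}^\gamma)^{2r-1}$ together with $\sum|\xi_{k,j}|\le\delta$, and for $p=\infty$ we bound the sum of $k^{2r-1}$ directly. Both give the same coefficient bound.
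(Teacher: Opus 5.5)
Your proof is correct and follows the same route as the paper: apply (\ref{LitP}) to the expansion (\ref{2nd_term}), bound each coefficient by H\"older's inequality exactly as in Lemma~\ref{lemma_BoundPertL2}, and sum over $l_r$ and $j$. Your explicit check that the $j$-series has exponent at most $-4r+2/p-1<-1$ fills in a step the paper leaves implicit. Your reading of the weight as $(\underline{l_r}\,\underline{j})^{1-2/q}$ is the right one: with the paper's $(\underline{j l_r})^{1-2/q}$, inequality (\ref{LitP}) would fail for functions of $t$ alone. Your reading still gives the same order $\delta\, n^{2r-1/p-1/q+1}$.
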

{\bf Proof.}
As before, for the second difference on the right-hand side of (\ref{fullError}), we will use the following representation
$$
\mathcal{D}_{n,\gamma}^{(r,0)} f(t,\tau) - \mathcal{D}_{n,\gamma}^{(r,0)} f^\delta(t,\tau) =
2^r\, \sum_{j=0}^{(n/r)^{1/\gamma}} \, T_j(\tau) \,
\sum_{l_r=0}^{n/\underline{j}^{\gamma}-r}\, \zeta_{l_r} T_{l_r}(t) \,
\sum_{k=l_r+r}^{n/\underline{j}^{\gamma}} \, k \, \xi_{k,j} \, B_k^r ,
$$
where $\xi_{k,j} = \langle f - f^\delta, T_{k,j} \rangle$.

Using (\ref{LitP}) and Hölder's inequality, we obtain for $1<p<\infty$:
$$
\|\mathcal{D}^{(r,0)}_{n,\gamma} f - \mathcal{D}^{(r,0)}_{n,\gamma} f^\delta\|_{L_{q,\omega}}
\le c\,
\left(\sum\limits_{j=0}^{(n/r)^{1/\gamma}}
\sum\limits_{l_r=0}^{n/\underline{j}^{\gamma}-r} (\underline{j l_r})^{1-2/q}
\big(\frac{n}{\underline{j}^\gamma}\big)^{2(2r-1/p)} \delta^2\right)^{1/2}
$$
$$
\le c\, \delta\, n^{2r-1/p-1/q+1}.
$$

For $p=1$ and $p=\infty$, the assertion follows similarly.
Thus, Lemma is proved. $\Box$

\vskip 2mm
The combination of Lemmas \ref{lemma_FirstDif_Lq} and \ref{lemma_SecDif_Lq} gives
\begin{theorem} \label{Th1_L_q}
Let $f\in BW^{\bar\mu}_{s,2}$, $1\leq s< \infty$, $2\le q< \infty$,
$\mu_1>2r-1/s-1/q+1$, $\mu_2>1-1/s-1/q$.
Then for any $ 1 \le \gamma < \frac{\mu_2+1/s+1/q-1}{\mu_1-2r+1/s+1/q-1}$
and $n \asymp \delta^{-\frac{1}{\mu_1-1/p+1/s}}$
it holds
$$
\|f^{(r,0)}-\mathcal{D}_{n,\gamma}^{(r,0)} f^\delta\|_{L_{q,\omega}}
\leq c\, \delta^{\frac{\mu_1-2r+1/s+1/q-1}{\mu_1-1/p+1/s}} .
$$
\end{theorem}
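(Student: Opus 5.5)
The plan is to follow exactly the pattern of the proofs of Theorems \ref{Th1} and \ref{Th2}. I will start from the decomposition (\ref{fullError}) and apply the triangle inequality in $L_{q,\omega}$. The first difference is controlled by Lemma \ref{lemma_FirstDif_Lq}. Since $f\in BW^{\bar\mu}_{s,2}$, we have $\|f\|_{s,\overline{\mu}}\le 1$, so under the stated hypotheses on $\mu_1,\mu_2,\gamma$ we obtain
$$
\|f^{(r,0)}-\mathcal{D}_{n,\gamma}^{(r,0)} f\|_{L_{q,\omega}}\le c\, n^{-\mu_1+2r-1/s-1/q+1}.
$$
The second difference is controlled by Lemma \ref{lemma_SecDif_Lq}, which requires only (\ref{perturbation}) and $2\le q<\infty$, and gives the bound $c\,\delta\, n^{2r-1/p-1/q+1}$.

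Adding the two bounds and factoring out the common power gives
$$
\|f^{(r,0)}-\mathcal{D}_{n,\gamma}^{(r,0)} f^\delta\|_{L_{q,\omega}}\le c\, n^{2r-1/q+1}\left(n^{-\mu_1-1/s}+\delta\, n^{-1/p}\right).
$$
The two terms in parentheses balance when $n^{-\mu_1-1/s+1/p}\asymp\delta$, that is, for $n\asymp \delta^{-\frac{1}{\mu_1-1/p+1/s}}$, which is precisely the stated rule. The denominator $\mu_1-1/p+1/s$ is positive, because $\mu_1>2r-1/s-1/q+1\ge 2r-1/2>1\ge 1/p$. With this choice the whole bound is of order $n^{-\mu_1+2r-1/s-1/q+1}$. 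Substituting $n$ gives
$$
\delta^{\frac{\mu_1-2r+1/s+1/q-1}{\mu_1-1/p+1/s}},
$$
which is the claimed estimate. The exponent is positive by the hypothesis on $\mu_1$, so the bound indeed tends to zero as $\delta\to 0$.

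There is no real obstacle, since all the analytic work sits in the two lemmas. The only points to check are bookkeeping. First, the admissible range of $\gamma$ in the theorem must coincide with that of Lemma \ref{lemma_FirstDif_Lq}, and the restriction $\gamma\ge1$ must be compatible with Lemma \ref{lemma_SecDif_Lq}, which holds for all $\gamma\ge1$. Second, the upper bound on $\gamma$ must be larger than $1$. This follows from $\mu_2$ exceeding $\mu_1-2r$, as in the earlier theorems; if that condition is not implicitly assumed, the admissible set could reduce to $\gamma=1$, but the estimate remains valid there. Third, the constant $c$ must depend only on $r,s,p,q,\bar\mu,\gamma$ and not on $n$ or $\delta$, which is inherited from the two lemmas.
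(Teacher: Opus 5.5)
Your proof is correct and follows the paper's argument: the triangle inequality via (\ref{fullError}), Lemmas \ref{lemma_FirstDif_Lq} and \ref{lemma_SecDif_Lq} with $\|f\|_{s,\overline{\mu}}\le 1$, and then balancing the two terms with $n\asymp\delta^{-1/(\mu_1-1/p+1/s)}$. One small correction to your side remark on $\gamma$, which does not affect the proof: if $\mu_2\le\mu_1-2r$, the set $1\le\gamma<\frac{\mu_2+1/s+1/q-1}{\mu_1-2r+1/s+1/q-1}$ is empty, so the theorem is vacuous; it does not reduce to $\gamma=1$, and Lemma \ref{lemma_FirstDif_Lq} would give nothing at $\gamma=1$ in that case.
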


\vskip 2mm

\begin{corollary} \label{Cor3}
\rm In the considered problem, the truncation method $\mathcal{D}^{(r,0)}_{n,\gamma}$ (\ref{ModVer})
achieves the accuracy
$O\left(\delta^{\frac{\mu_1-2r+1/s+1/q-1}{\mu_1-1/p+1/s}}\right)$
on the class $BW^{\bar\mu}_{s,2}$, $1\leq s< \infty$, $2\le q< \infty$,
$\mu_1>2r-1/s-1/q+1$, $\mu_2>1-1/s-1/q$, and requires
$$
\mathrm{card}(\Gamma_{n,\gamma}) \asymp \left\{
\begin{array}{cl}
n \asymp \delta^{-\frac{1}{\mu_1-1/p+1/s}}, & \mbox{ if }\ 1 < \gamma < \frac{\mu_2+1/s+1/q-1}{\mu_1-2r+1/s+1/q-1}, \\
n\ \ln n \asymp \delta^{-\frac{1}{\mu_1-1/p+1/s}} \ln \frac{1}{\delta}, & \mbox{ if }\ \gamma=1,
\end{array}
\right.
$$
perturbed Fourier-Chebyshev coefficients.
\end{corollary}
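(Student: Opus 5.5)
The corollary follows from Theorem \ref{Th1_L_q} together with a count of the lattice points in the hyperbolic cross $\Gamma_{n,\gamma}$. The accuracy statement is exactly Theorem \ref{Th1_L_q}. Under the stated hypotheses on $s,q,\mu_1,\mu_2,\gamma$, and with the a priori choice $n\asymp \delta^{-\frac{1}{\mu_1-1/p+1/s}}$, Lemmas \ref{lemma_FirstDif_Lq} and \ref{lemma_SecDif_Lq} bound the error by
$$
c\,\bigl(n^{-\mu_1+2r-1/s-1/q+1}+\delta\, n^{2r-1/p-1/q+1}\bigr)=c\,n^{2r-1/q+1}\bigl(n^{-\mu_1-1/s}+\delta n^{-1/p}\bigr).
$$
With this $n$ the two terms in the last bracket balance, since $\delta\asymp n^{-\mu_1-1/s+1/p}$. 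Substituting gives the order $\delta^{\frac{\mu_1-2r+1/s+1/q-1}{\mu_1-1/p+1/s}}$. So the only new content is the cardinality of $\Gamma_{n,\gamma}$, which is the number of perturbed coefficients $\langle f^\delta,T_{k,j}\rangle$ that $\mathcal{D}^{(r,0)}_{n,\gamma}$ uses in (\ref{ModVer}).

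To count, I sum over the second index. For each $j$ with $0\le j\le (n/r)^{1/\gamma}$, the admissible $k$ run over $r\le k\le n/\underline{j}^{\gamma}$, which gives at most $n\underline{j}^{-\gamma}+1$ values. For small $j$ (say $\underline{j}^\gamma\le n/(2r)$) it gives at least $c\,n\underline{j}^{-\gamma}$ values. Hence
$$
\mathrm{card}(\Gamma_{n,\gamma})\asymp n\sum_{j=1}^{(n/r)^{1/\gamma}} j^{-\gamma} + O\bigl(n^{1/\gamma}\bigr).
$$
For $\gamma>1$ the series converges, so the main term is $\asymp n$. The remainder $n^{1/\gamma}\le n$ is absorbed, and the lower bound $\mathrm{card}\ge n-r+1$ already comes from $j=0$. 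For $\gamma=1$ the harmonic sum up to $n/r$ is $\asymp \ln n$, so $\mathrm{card}(\Gamma_{n,1})\asymp n\ln n$.

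It remains to substitute $n\asymp\delta^{-\frac{1}{\mu_1-1/p+1/s}}$. Since $\ln n\asymp \frac{1}{\mu_1-1/p+1/s}\ln\frac1\delta\asymp \ln\frac1\delta$, the constants depend only on $\mu_1,p,s$, and $\mu_1-1/p+1/s>0$ under the assumptions. This gives the displayed cases. The only slightly delicate step is the lower bound in the $\gamma=1$ case. There I would restrict to $j\le n/(2r)$, so that each row contributes at least $n/(2j)$ indices $k$, and this still yields a harmonic sum $\gtrsim \ln n$. The rest is immediate bookkeeping.
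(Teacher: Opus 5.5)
Your proposal is correct and follows the route the paper takes implicitly: the accuracy bound is Theorem \ref{Th1_L_q} with the balancing choice $n\asymp\delta^{-1/(\mu_1-1/p+1/s)}$, and the cardinality claim is the standard lattice-point count for the hyperbolic cross $\Gamma_{n,\gamma}$. The paper states that count without proof, and your row-by-row summation supplies it correctly: the convergent series $\sum_j j^{-\gamma}$ handles $\gamma>1$, and for $\gamma=1$ the harmonic sum, restricted to $j\le n/(2r)$ for the lower bound, gives $n\ln n$.
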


\vskip 2mm
	
\begin{remark} \label{Legendre}
The problem of recovering partial derivatives of bivariate functions using the truncation method
was previously studied in \cite{Sol_Stas_UMZ2022}, and \cite{Sem_Sol_SIM2025}.
Both of these articles considered truncated Legendre's method.
A comparison of our results with those of predecessors shows that,
in the $C$-metric, Chebyshev polynomials provide a higher accuracy of numerical differentiation compared to Legendre polynomials.
On the other hand, in the Hilbert space metric, both versions of the truncation method
(with Chebyshev and Legendre polynomials) provide the same order of accuracy.
\end{remark}	

\vskip 2mm

\begin{remark} \label{Compar_opt}
Chebyshev polynomials in the problem of numerical differentiation were also used in \cite{Sem_Sol_arxiv},
where the case of functions of one variable was considered.
Note that the accuracy estimates of the truncation method found in our work and in \cite{Sem_Sol_arxiv}
in the metrics of Hilbert space and in $C$, as well as the estimates of the number of 
Fourier-Chebyshev coefficients involved, coincide in order.
Since in \cite{Sem_Sol_arxiv} it is proved that the mentioned estimates are unimprovable on Wiener classes of functions of one variable,
then through the embedding of classes of functions of different numbers of variables one can conclude
that the corresponding estimates found in Sections 2, 3 of our work are optimal in order also in the case of functions
from the class $BW^{\bar\mu}_{s,2}$.
\end{remark}

\end{document}